\newtheorem{thm}{Theorem}
\newtheorem{prop}{Proposition}
\newtheorem{lemma}{Lemma}
\newtheorem{remark}{Remark}
\newcommand{\dsp}{\displaystyle}
\newcommand{\eps}{\varepsilon}
\newcommand{\R}{\mathbb{R}}
\newcommand{\supp}{\mathrm{supp}}
\newcommand{\sep}{\:|\:}
\numberwithin{equation}{section}
\begin{document}
 
\title{The Cauchy problem for the 3-D Vlasov-Poisson system with point charges}

 \author{Carlo Marchioro, Evelyne Miot and Mario Pulvirenti}

\date{\today}

\maketitle

\begin{abstract}
 In this paper we establish global existence and uniqueness of the solution to the three-dimensional Vlasov-Poisson 
system in presence
of point charges in case of repulsive interaction. The present analysis extends an analogeous two-dimensional 
result \cite{CM}.
\end{abstract}

\section{Introduction}

In this paper we study the time evolution of a three-dimensional system constituted by a continuous distribution
of electric charges, a plasma, coupled with $N$ charged point particles.

All the charges, as well as the plasma, have the same sign so that the interaction is repulsive. 
For simplicity we assume that the charges and the masses of the point particles are unitary. If $f=f(x,v,t)$ denotes 
the mass distribution of the plasma and $\{\xi_\alpha\}_{\alpha=1}^N$ are the positions of the point particles, the 
dynamics of the system
is described by the following system of equations
\begin{equation}
\label{eq:system}
 \begin{cases}
\dsp  \partial_t f+v\cdot \nabla_x f+(E+F)\cdot \nabla_v f=0\\ \vspace*{0.5em}
\dsp E(x,t)=\int_{\R^3} \frac{x-y}{|x-y|^3}\rho(y,t)\,dy\\\vspace*{0.5em}
\dsp \rho(x,t)=\int_{\R^3} f(x,v,t)\,dv\\\vspace*{0.5em}
\dsp F(x,t)=\sum_{\alpha=1}^N \frac{x-\xi_\alpha(t)}{|x-\xi_\alpha(t)|^3}\\\vspace*{0.5em}
\dsp \dot{\xi}_\alpha(t)=\eta_\alpha(t),\quad \alpha=1,\ldots,N\\
\dsp \dot{\eta}_\alpha(t)=E(\xi_\alpha(t),t)+\sum_{\substack{\beta=1\\\beta\neq \alpha}}^N 
\frac{\xi_\alpha(t)-\xi_\beta(t)}{|\xi_\alpha(t)-\xi_\beta(t)|^3}. 
 \end{cases}
\end{equation}
In absence of the charges, the system \eqref{eq:system} reduces to the well-known Vlasov-Poisson equation, which has been
widely investigated in the last years.

\medskip

The difficulty of the Cauchy problem associated to the Vlasov-Poisson problem increases with the dimension of the physical 
space. 

In two dimensions, satisfactory existence and uniqueness results go back to \cite{OkUk} and \cite{Horst}. 
The three-dimensional problem was solved in the nineties in \cite{Pf}, \cite{Sh}, \cite{Wo}, by a careful analysis
of characteristics associated to the Vlasov-Poisson system (Lagrangian point of view) or by estimating the moments of 
$f$ via a more genuine PDE technique \cite{LP} (eulerian point of view). We address the reader to the monograph \cite{G}
for a complete analysis of this equation and additional references.

\medskip

When point charges enter in the game, the situation changes drastically even if complete repulsivity is assumed. The extra singular
field could, in principle, produce extremely large velocities of the plasma particles and, in turn, a large spatial density
and a blow-up of the electric field in finite time. However we know that it is not the case in dimension two. Indeed, in \cite{CM}
it was shown a global existence and uniqueness result for the two-dimensional Cauchy problem associated to system 
\eqref{eq:system}. The basic ingredient of the analysis in \cite{CM} was the introduction of the energy of a trajectory of the
plasma in the reference frame of a suitable point charge. The advantage of using this energy function is two-fold. From one side it controls the motion. From the other, its time derivative along the
trajectory does not involve the singular part of the electric field. Combining this idea with the well-known fact that, 
in dimension two, the electric field generated by the plasma is linearly bounded by the maximal velocity of the particles, 
one can then 
conclude (see \cite{CM} for the details).

Our purpose here is to study the more complex three-dimensional problem. Our approach relies heavily on the adaptation of the method in 
\cite{Pf}, \cite{Sh} and \cite{Wo} to the present situation, for which the energy associated to a trajectory 
(see \eqref{def:en} or \eqref{def-m:en}) plays an essential role. We explain the main steps of this adaptation in the next section after some preliminary 
considerations have been presented.

\medskip

We mention that a related issue concerning the Cauchy problem for the Vlasov-Poisson equation, namely the instability arising from a singular perturbation of the field, has been recently analyzed in \cite{instab}, see also \cite{G1} or \cite{G2}. In that situation the extra singular field is due to reflecting boundary conditions.

\medskip

The plane of the paper is the following. In Section \ref{sec:one} we solve the Cauchy problem for one single charge ($N=1$).
This result is well suited to be easily extended to the full problem $N\geq 1$. This extension is done in 
Section \ref{sec:many}. Section \ref{sec:conclusion} is finally devoted to comments and criticism.

\section{Preliminaries and general strategy}

We use this section to fix the notations, to recall some well known estimates (see e.g. \cite{Wo}, \cite{G} and references 
quoted therein) and to illustrate the strategy we follow to treat the present problem.

\medskip

Let $f=f(x,v)$, $(x,v)\in \R^3\times \R^3$, be a probability density (namely $f\geq 0$ and $\int f\,dx\,dv=1$). We denote by
\begin{equation}
 \label{eq:kinetic}
 K_0=\frac{1}{2}\int |v|^2 f(x,v)\,dx\,dv
\end{equation}
the kinetic energy and by
$$ \rho(x)=\int f(x,v)\,dv$$
the spatial density.

Also, $K$ and $K_i,i=1,\ldots,$ will stand for positive constants depending only on the kinetic energy 
\eqref{eq:kinetic} and on $\|f\|_{L^\infty}$,
which we assume to be finite. Moreover $C$ will denote any numerical positive constant.

\medskip

For any $M\geq 0$, we have
\begin{equation*}
 \begin{split}
  \rho(x)&=\int_{|v|< M} f(x,v)\,dv+\int_{|v|\geq M}f(x,v)\,dv\\
&\leq \frac{4}{3} \pi M^3 \|f\|_{L^\infty}+\frac{1}{M^2}\int |v|^2 f(x,v)\,dv.
 \end{split}
\end{equation*}
Optimizing in $M$, we find
\begin{equation*}
 \rho(x)\leq C\|f\|_{L^\infty}\left( \int |v|^2 f(x,v)\,dv\right)^{3/5},
\end{equation*}
 whence
\begin{equation}
 \label{ineq:rho}
\|\rho\|_{L^{5/3}}\leq K_1.
\end{equation}
Moreover, defining for $R>0$
\begin{equation*}
 \rho_R(x)=\int_{|v|<R} f(x,v)\,dv,
\end{equation*}
we have by using H\"older inequality
\begin{equation*}
 \begin{split}
  \int \frac{\rho_R(x')}{|x-x'|^2}\,dx'&= \int_{|x-x'|< M}\frac{\rho_R(x')}{|x-x'|^2}\,dx'
+\int_{|x-x'|\geq M} \frac{\rho_R(x')}{|x-x'|^2}\,dx'\\
&\leq C \|\rho_R\|_{L^\infty}M+\|\rho_R\|_{L^{5/3}}\left( \int_{|x-x'|\geq M} \frac{dx'}{|x-x'|^5}\right)^{2/5}.
 \end{split}
\end{equation*}
Optimizing in $M$ and using \eqref{ineq:rho} we obtain
\begin{equation}
\label{ineq:rho2}
\int \frac{\rho_R(x')}{|x-x'|^2}\,dx'\leq K_2 \|\rho_R\|_{L^\infty}^{4/9}.
\end{equation}

Summarizing the previous considerations we are led to the

\begin{prop}
 \label{prop:prelim1}
There exists a constant $K>0$ (depending only on $\int |v|^2 f\,dx\,dv$ and $\|f\|_{L^\infty}$) for which
\begin{equation}
 \label{ineq:rho3}
\int \frac{\rho_R(x')}{|x-x'|^2}\,dx'\leq K R^{4/3}
\end{equation}
for all $R>0$.
\end{prop}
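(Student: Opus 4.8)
The plan is to combine the estimate \eqref{ineq:rho2} already at hand with an elementary $L^\infty$ bound on the truncated density $\rho_R$, so that the only remaining work is to express $\|\rho_R\|_{L^\infty}$ as a power of $R$. Since \eqref{ineq:rho2} reads
$$\int \frac{\rho_R(x')}{|x-x'|^2}\,dx'\leq K_2\|\rho_R\|_{L^\infty}^{4/9},$$
it suffices to control the right-hand side.

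First I would bound $\|\rho_R\|_{L^\infty}$ directly from the definition $\rho_R(x)=\int_{|v|<R}f(x,v)\,dv$. Using $f\leq \|f\|_{L^\infty}$ pointwise and integrating over the velocity ball $\{|v|<R\}$, whose volume is $\frac{4}{3}\pi R^3$, I get
$$\rho_R(x)\leq \|f\|_{L^\infty}\int_{|v|<R}dv=\frac{4}{3}\pi\|f\|_{L^\infty}R^3$$
uniformly in $x$, hence $\|\rho_R\|_{L^\infty}\leq C\|f\|_{L^\infty}R^3$.

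Substituting this into \eqref{ineq:rho2} and using the exponent identity $(R^3)^{4/9}=R^{4/3}$, I obtain
$$\int \frac{\rho_R(x')}{|x-x'|^2}\,dx'\leq K_2\big(C\|f\|_{L^\infty}R^3\big)^{4/9}=K R^{4/3},$$
where $K$ collects $K_2$, the numerical constant, and the factor $\|f\|_{L^\infty}^{4/9}$, and therefore depends only on $\|f\|_{L^\infty}$ and on the kinetic energy through \eqref{ineq:rho}. There is essentially no obstacle here, since the substance of the argument was already contained in \eqref{ineq:rho} and \eqref{ineq:rho2}; the only points to check are that the crude volume bound on $\rho_R$ is exactly the right input (a finer, interpolation-type bound would not produce a clean power of $R$, whereas $(R^3)^{4/9}$ lands precisely on the desired exponent $4/3$) and that the resulting estimate is uniform in $R$, including the limits $R\to 0$ and $R\to\infty$, which it manifestly is.
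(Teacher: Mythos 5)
Your proof is correct and follows exactly the paper's own argument: substituting the crude volume bound $\|\rho_R\|_{L^\infty}\leq \frac{4}{3}\pi R^3\|f\|_{L^\infty}$ into \eqref{ineq:rho2} and using $(R^3)^{4/9}=R^{4/3}$. The paper states this in a single line; your version merely spells out the same computation.
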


\begin{proof}
 Estimate \eqref{ineq:rho3} follows from \eqref{ineq:rho2}, realizing that $\|\rho_R\|_{\infty}\leq \frac{4}{3}
\pi R^3 \|f\|_{L^\infty}$.
\end{proof}

\bigskip

We now come to Problem \eqref{eq:system} and define the class of solutions we wish to deal with. 

 Let 
$\{(\xi_{\alpha0},\eta_{\alpha0})\}_{\alpha=1}^N$ denote the initial positions and velocities of the point charges. Let $f_0$ be a compactly supported probability density 
 satisfying, for some positive $\delta_0$,
 \begin{equation}
  \label{assumption:compact}
 \min \left\{ |x-\xi_{\alpha0}|\sep\: (x,v)\in \supp(f_0), \alpha=1,\ldots,N\right\}\geq \delta_0.
 \end{equation}
 
Let $T>0$.
We say that $(\xi_1,\ldots,\xi_N,f)$ is a solution of Problem \eqref{eq:system} on $[0,T]$ with initial datum
$(\xi_{\alpha0},\eta_{\alpha0},\alpha=1,\ldots,N,f_0)$ if   
\begin{equation}
\label{assumption:space}\xi_\alpha\in C^2([0,T]),\quad f\in L^\infty\left([0,T],L^1\cap L^\infty\right),\quad 
\rho \in L^\infty\left([0,T],L^\infty \right)\end{equation}
and for $t\in [0,T]$ we have
\begin{equation}
 \label{eq:edo2}
\begin{cases}
\dsp \dot{\xi}_\alpha(t)=\eta_\alpha(t)\\
\dsp \dot{\eta}_\alpha(t)=E(\xi_\alpha(t),t)+\sum_{\beta\neq \alpha}
\frac{\xi_\alpha(t)-\xi_\beta(t)}{|\xi_\alpha(t)-\xi_\beta(t)|^3}\\
\dsp(\xi_\alpha,\eta_\alpha)(0)=(\xi_{\alpha0},\eta_{\alpha0}).
\end{cases}
\end{equation}
Moreover 
\begin{equation}
\label{eq:transport}
f\left(X(x,v,0,t),V(x,v,0,t),t\right)=f_0(x,v),
\end{equation}
where for all $\tau,t\in [0,T]$ 
$$ (X,V)(\cdot,\cdot,\tau,t):\R^3\setminus\cup_\alpha\{\xi_\alpha(\tau)\}\times \R^3\to \R^3\setminus\cup_\alpha\{\xi_\alpha(t)\}\times \R^3$$
is an invertible flow such that
\begin{equation}
\label{assumption:lower}
 |X(x,v,\tau,t)-\xi_\alpha(t)|\geq \delta(T),\quad \forall(x,v)\in \supp(f(\tau)) 
\end{equation}
for some $\delta(T)>0$. It satisfies for $t\in [0,T]$
 \begin{equation}
\label{eq:edo}
\begin{cases}
  \dsp   \frac{d}{dt}X(x,v,\tau,t)=V(x,v,\tau,t) \\ 
\dsp  \frac{d}{dt}V(x,v,\tau,t)=E\left(X(x,v,\tau,t),t\right)+
\sum_{\alpha=1}^N\frac{X(x,v,\tau,t)-\xi_\alpha(t)}{|X(x,v,\tau,t)-\xi_\alpha(t)|^3}\\ 
\left(X,V\right)(x,v,\tau,\tau)=(x,v)\in \supp(f(\tau)).
 \end{cases}
\end{equation}

\medskip

Given $\rho\in L^\infty\left([0,T],L^1\cap L^\infty(\R^3)\right)$ it is a well-known fact (see e.g. \cite{LP}) that 
the corresponding field 
$E$ belongs to $L^\infty\left([0,T]\times \R^3\right)$. Moreover, it is almost-Lipschitz in the sense that for all $(x,y,t)\in \R^3\times \R^3\times [0,T]$
\begin{equation}
\label{eq:almost-lipschitz}
 |E(x,t)-E(y,t)|\leq C|x-y|\left(1+|\ln|x-y||\right).
\end{equation}
 In particular the solutions of the ODEs \eqref{eq:edo2} and \eqref{eq:edo} are uniquely defined
as long as the distance 
between the plasma
particles and the charges remains positive. 

Thanks to the hamiltonian structure of the system, the flow $(X,V)(0,t)$ preserves 
Lebesgue's measure on $\R^3\times \R^3$. As a result all the norms $\|f(t)\|_{L^p}$, $1\leq p\leq +\infty$ 
are preserved. Finally, it follows from \eqref{assumption:compact}, \eqref{assumption:lower} and
from the fact
that $E$ is bounded that the density $f(t)$ remains compactly supported for all $t\in[0,T]$.

\medskip

We define the total energy associated to Problem \eqref{eq:system} by
\begin{equation*}
 \begin{split}
H(f)&=\frac{1}{2}\int |v|^2f(x,v)\,dx\,dv+\frac{1}{2}\sum_{\alpha=1}^N |\eta_\alpha|^2\\
&+\sum_{\alpha=1}^N \int \frac{\rho(x)}{|x-\xi_\alpha|}\,dx
+\frac{1}{2}\iint \frac{\rho(x)\rho(y)}{|x-y|}\,dx\,dy
+\frac{1}{2}\sum_{\alpha\neq \beta}  \frac{1}{|\xi_\alpha-\xi_\beta|}.
\end{split}
\end{equation*}
One easily checks that for a solution to Problem \eqref{eq:system} $H(f(t))$ is finite and constant
on $[0,T]$.
Due to the positivity of the interaction, the kinetic energy \eqref{eq:kinetic} of the plasma part is bounded by 
$H(f(t))\equiv H(f(0))$. Therefore we may assume that the constant $K$ appearing in \eqref{ineq:rho3} does not depend on the time thanks to
the conservation of the energy and of $\|f\|_{L^\infty}$.

\medskip

Let us come back, for the moment, to the Vlasov-Poisson problem ignoring the point charges.

Denoting by
\begin{equation*}
P(t)=\text{sup}\left\{|v|\sep \:(x,v)\in \supp(f(t))\right\}
\end{equation*}
 we conclude by \eqref{ineq:rho3} that
\begin{equation}
 \label{ineq:P1}
P(t)\leq P(0)+ K\int_0^t P(s)^{4/3}\,ds.
\end{equation}
Indeed, the electric field computed on a characteristic $X=X(t)$ is bounded by
\begin{equation*}
\begin{split}
 |E(X(t),t)|&\leq \int \frac{\rho(x',t)}{|X(t)-x'|^2}\,dx'=\int \frac{\rho_{P(t)}(x',t)}{|X(t)-x'|^2}\,dx',
\end{split}
\end{equation*}
hence \eqref{ineq:P1} follows from \eqref{ineq:rho3}.

Obviously \eqref{ineq:P1} does not provide an a priori global bound for $P(t)$. A refined estimate allowing to 
solve the $3$-D problem has been obtained by considering the \emph{time averaging of the electric field} along a trajectory, see \cite{Pf}, \cite{Sh},
\cite{Wo} or \cite{G}. The basic idea consists in  partitioning the time interval $[0,T]$ into small pieces $(t_{i-1},t_i)$ 
for $i=1,2,\ldots,$ of small length $\Delta T=|t_{i-1}-t_i|$. For a given characteristic $(X,V)(t)$, one writes thanks to Liouville's theorem
\begin{equation}
 \label{eq:av1}
 \int_{t_{i-1}}^{t_i}dt\,|E\left(X(t),t\right)|\leq C \int_{t_{i-1}}^{t_i}dt\,\int f(y,w,t_{i-1})\frac{1}{|X(t)-Y(t)|^2}\,dy\,dw
\end{equation}
where $(Y,W)(t)$ is a characteristic leaving $(y,w)$ at time $t_{i-1}$.

Now, when $Y(t)$ is a trajectory of large velocity and large relative velocity at time $t_{i-1}$ 
(namely $|w|$ and $|V(t_{i-1})-w|$ are $\mathcal{O}(P^{4/3})$), we restrict our attention to the \emph{time integral}
\begin{equation}
 \label{eq:averaging}
\int_{t_{i-1}}^{t_i} \frac{dt}{|X(t)-Y(t)|^2}.
\end{equation}
Here $\Delta T$ is chosen so small that the relative velocity remains large in that time interval (\emph{stability property}).
Then the time integral \eqref{eq:averaging} can be computed almost explicitely, using that $X(t)-Y(t)$ essentially 
performs a free motion.
As a consequence the contribution of \eqref{eq:averaging} to the time integral of the electric field is shown to be smaller 
than $\mathcal{O}(P\Delta T)$ 
(see \cite{Pf}, \cite{Sh}, \cite{Wo} and \cite{G} or Lemma \ref{lemma:scat-plasma-plasma} below).
We call this contribution \emph{scattering plasma-plasma}.

The other contributions in \eqref{eq:av1} can then be handled by means of static estimates relying essentially on 
Proposition \ref{prop:prelim1} above.

\medskip

When a single point charge is present, the scenario changes dramatically, since the stability property for the trajectories of the plasma fails. Indeed the 
relative velocity of the plasma particles can change extremely fast if one of them collides with 
(or get very close to) the point charge. Nevertheless the interval of time in which the \emph{scattering charge-plasma} takes place is so small
that the contribution to the time integral \eqref{eq:averaging} can again be controlled (see Lemma \ref{lemma:scatt-plasma-charge} below).

One also has to remark that, instead of using the maximal velocity $P(t)$ as a control quantity, it is more convenient
to use as in \cite{CM} the energy of a trajectory, the 
time derivative of which cancels the singular part of the electric field.

\medskip

Once treated the single point charge-plasma in Section \ref{sec:one}, we 
can turn to the $N$-charges problem in Section \ref{sec:many}. Indeed, the choice of $\Delta T$ and of
all other parameters ensures that a plasma particle cannot get close to more than one point charge in every time interval
$(t_{i-1},t_i)$. As a consequence we can transfer the single charge analysis to the $N$-charges problem with minor 
modifications. 

\section{The plasma-charge model}
\label{sec:one}

Let $f=f(x,v,t)$ be the probability distribution of the plasma particles, and let $\xi$ and $\eta=\dot{\xi}$ denote
the position and the velocity of a single point particle of unitary charge. Problem \eqref{assumption:space}-\eqref{eq:edo} 
reads
\begin{equation}
\label{assumption:classe-1}
f\in L^\infty\left(L^1\cap L^\infty\right),
\quad \rho\in L^\infty\left(L^1\cap L^\infty\right),\quad E=\frac{x}{|x|^3}\ast \rho,
\end{equation}
with
\begin{equation}
 \label{eq:transport-one}
f\left(X(x,v,0,t),V(x,v,0,t),t\right)=f(x,v,0),\quad  (t,x,v)\in \R\times \R^3\times \R^3,
\end{equation}
where $(X,V)(x,v,0,t)=(X,V)(t)$ satisfies
\begin{equation}
\label{eq:syst1}
\begin{cases}
  \dsp   \dot{X}(t)=V(t) \\ 
\dsp  \dot{V}(t)=E\left(X(t),t\right)+\frac{X(t)-\xi(t)}{|X(t)-\xi(t)|^3}\\
\dsp (X,V)(0)=(x,v),\quad x\neq \xi(0)
 \end{cases}
\end{equation}
and 
\begin{equation}
\label{eq:syst2}
 \begin{cases}
  \dsp \dot{\xi}(t)=\eta(t)\\
\dsp \dot{\eta}(t)=E\left(\xi(t),t\right).
 \end{cases}
\end{equation}

The main result of this section is summarized in

\begin{thm}
\label{thm:main1}
 Let $f_0\in L^\infty$ be a compactly supported probability distribution. Let $(\xi_0,\eta_0)\in \R^3\times \R^3$. 
Assume that there exists some $\delta_0>0$ such that 
$$\mathrm{min}\left\{|x-\xi_0|\:\sep\:(x,v)\in \supp(f_0)\right\}\geq \delta_0.$$ 

For all time $T>0$ there exists a unique solution $(\xi,f)$ to Problem \eqref{assumption:classe-1}-
\eqref{eq:syst2} on $[0,T]$ with this 
initial datum.
\end{thm}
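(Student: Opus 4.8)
The plan is to establish global existence and uniqueness by proving a uniform a priori bound on the maximal velocity of the plasma (or, more precisely, on the trajectory energy in the charge frame) over any finite interval $[0,T]$, and then invoking a standard continuation argument for the coupled system \eqref{eq:syst1}--\eqref{eq:syst2}. The almost-Lipschitz property \eqref{eq:almost-lipschitz} already guarantees that solutions exist and are unique locally, for as long as the plasma support stays away from $\xi(t)$. Thus everything reduces to showing two things simultaneously: that the velocities cannot blow up in finite time, and that the plasma particles cannot actually collide with the point charge.

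First I would set up the energy of a trajectory in the reference frame of the point charge, following the idea from \cite{CM}: for a characteristic $(X,V)(t)$ I consider a quantity of the form $\frac12|V(t)-\eta(t)|^2 + |X(t)-\xi(t)|^{-1}$, which combines kinetic energy relative to the charge with the repulsive singular potential. The crucial point is that its time derivative along the flow cancels the singular field $\tfrac{X-\xi}{|X-\xi|^3}$, leaving only the smooth self-consistent field $E$ contributed by the plasma (evaluated at $X$ and at $\xi$). This immediately yields a lower bound $|X(t)-\xi(t)| \geq \delta(T) > 0$ on the distance, because the energy controls $|X-\xi|^{-1}$ from above, so the flow stays in the punctured space and \eqref{assumption:lower} holds.

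Next I would adapt the time-averaging of the electric field from \cite{Pf}, \cite{Sh}, \cite{Wo}. I partition $[0,T]$ into subintervals $(t_{i-1},t_i)$ of length $\Delta T$ and, via Liouville's theorem as in \eqref{eq:av1}, reduce the control of $\int |E(X(t),t)|\,dt$ to bounding the scattering integral \eqref{eq:averaging}. Here the analysis splits into two regimes. The plasma-plasma scattering is handled exactly as in the charge-free case, using the stability of relative velocities over small $\Delta T$ together with the static bound \eqref{ineq:rho3} from Proposition \ref{prop:prelim1}; the contribution is $\mathcal{O}(P\Delta T)$ (Lemma \ref{lemma:scat-plasma-plasma}). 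The genuinely new ingredient is the charge-plasma scattering: near the point charge the relative velocity is no longer stable, but I would show that the \emph{duration} of any close encounter is so short that its contribution to \eqref{eq:averaging} is still controllable (Lemma \ref{lemma:scatt-plasma-charge}), with the energy function providing the quantitative control on how close and how fast a trajectory can get.

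The hard part will be the charge-plasma scattering estimate. Unlike the plasma-plasma case, one cannot appeal to a stability property, so the entire argument must rest on a careful geometric and temporal analysis of the two-body-like motion of a plasma particle in the singular repulsive field of the charge, using conservation of the trajectory energy to bound the time spent in the dangerous region. Once both scattering lemmas are in place, combining them with the static estimates for the remaining (non-close, moderate-velocity) contributions yields a closed Gronwall-type inequality for the maximal energy that, unlike the naive bound \eqref{ineq:P1}, integrates to a finite value on $[0,T]$. This uniform bound, together with the distance lower bound $\delta(T)$, prevents blow-up and collision, so the local solution extends to all of $[0,T]$; uniqueness follows from the almost-Lipschitz estimate \eqref{eq:almost-lipschitz} applied to two solutions sharing the same data.
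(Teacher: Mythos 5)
Your proposal is correct and follows essentially the same route as the paper: the energy $h$ of a trajectory in the charge's frame (whose derivative cancels the singular field), a Pfaffelmoser--Schaeffer--Wollman partition of $[0,T]$ with Liouville's theorem, the plasma--plasma scattering estimate restricted away from the charge, the new charge-plasma estimate bounding the \emph{duration} of close encounters (the paper does this via the virial function $I(t)=\tfrac12|Y(t)-\xi(t)|^2$, Lemmas \ref{lemma:scatt-plasma-charge} and \ref{lemma:scatt-plasma-charge-bis}), and an iterated bound $Q_i\leq Q_{i-1}+CQ\Delta T$ closing the argument. The only deviation is at the construction stage, where you invoke local existence plus continuation while the paper instead regularizes the Coulomb kernel at scale $\eps$ and passes to the limit using the uniform bound \eqref{ineq:H1} — an equally standard device that sidesteps justifying local well-posedness for the singular system directly.
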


We shall prove that, assuming a solution to Problem \eqref{assumption:classe-1}-\eqref{eq:syst2} 
exists up to a fixed but arbitrary
time $T>0$, we have
\begin{equation}
 \label{ineq:H1}
\sup\left \{ |V(x,v,0,t)|+\frac{1}{|X(x,v,0,t)-\xi(t)|} \sep\quad   t\in [0,T],(x,v)\in \supp(f_0)\right\}\leq C(T)
\end{equation}
where $C(T)$ is a constant depending only on $f_0$ and $(\xi_0,\eta_0)$. As a consequence of \eqref{ineq:H1} the fact
that a unique solution to Problem \eqref{assumption:classe-1}-\eqref{eq:syst2}  does exist follows by 
rather standard arguments presented in Paragraph \ref{subsection:completed} at the end of the section.

\medskip

In what follows, $C_i$ and $K_i,i=0,1,\ldots,$ will denote positive constants depending only on 
$\|f_0\|_{L^\infty}$ and $H$, where $H$ denotes the global energy. 

\medskip

In the case of one single point charge, the energy reduces to
\begin{equation*}
\begin{split}
H\equiv\frac{1}{2}\int |v|^2f(x,v,t)&\,dx\,dv+\frac{1}{2}|\eta(t)|^2\\&+ \int \frac{\rho(x,t)}{|x-\xi(t)|}\,dx
+\frac{1}{2}\iint \frac{\rho(x,t)\rho(y,t)}{|x-y|}\,dx\,dy.
\end{split}
\end{equation*}
In particular, we have
\begin{equation}
 \label{ineq:eta}
|\eta(t)|\leq \sqrt{2H}.
\end{equation}
Following \cite{CM} we also introduce the pointwise energy of a plasma particle
\begin{equation}
\label{def:en}
h(x,v,t)=\frac{|v-\eta(t)|^2}{2}+\frac{1}{|x-\xi(t)|}+K_1,
\end{equation}
where $K_1$ is a large constant. A possible choice is
\begin{equation*}
 K_1\geq\max(8H,1).
\end{equation*}
In particular, 
in view of \eqref{ineq:eta} this choice ensures that for all $(x,v,t)$
\begin{equation}
 \label{ineq:H2}
|V(x,v,0,t)|\leq 2\sqrt{ h}\left(X(x,v,0,t),V(x,v,0,t),t\right).
\end{equation}
As already mentionned, the energy turns out to be a relevant quantity to control the motion, since it controls both the velocity and the distance from $\xi$ of the characteristic under consideration. We remark that $h$ is
uniformly bounded on $\supp(f_0)$ at time $0$.

In the following we shall use the short-hand notation $(X(t),V(t))=\left(X(x,v,0,t),V(x,v,0,t)\right)$ when
the initial condition $(x,v)$ is clear from the context. 

\medskip

Differentiating along the characteristics of the plasma particles and using \eqref{eq:syst1}-\eqref{eq:syst2} we find
\begin{equation*}
 \dot{h}\left(X(t),V(t),t\right)=\left( V(t)-\eta(t)\right)\cdot \left( E(X(t),t)-E(\xi(t),t)\right)
\end{equation*}
from which
\begin{equation}
 \label{ineq:h1}
  \Big|\frac{d}{dt} \sqrt{h}\left(X(t),V(t),t\right)\Big|\leq |E(\xi(t),t)|+|E(X(t),t)|.
\end{equation}
Note that the variation of $h$ is controlled by the smooth part of the electric field and this is, of course, crucial. We introduce the quantity 
\begin{equation*}
Q=Q_T=\sup\left\{\sqrt{h}\left(X(t),V(t),t\right) \sep\quad  t\in [0,T],(x,v)\in \supp (f_0)\right\}.
\end{equation*}
The remainder of this section is devoted to the proof of the following estimate 
from which \eqref{ineq:H1} follows immediately.
\begin{prop}
 \label{prop:mainQ}
We have
\begin{equation*}
Q_T\leq (Q_0+C_1)\exp(C_1 (1+T))\quad \forall T>0.
\end{equation*}
\end{prop}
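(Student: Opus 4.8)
The plan is to integrate the energy inequality \eqref{ineq:h1} along an arbitrary plasma characteristic and to reduce the statement to a Gronwall argument, the whole difficulty being concentrated in estimating the time integral of the \emph{smooth} field $E$ along the trajectory. Fix $(x,v)\in\supp(f_0)$ and write $(X(t),V(t))$ for the corresponding characteristic. Integrating \eqref{ineq:h1} between $0$ and $t$ gives
\begin{equation*}
\sqrt{h}(X(t),V(t),t)\le Q_0+\int_0^t |E(\xi(s),s)|\,ds+\int_0^t |E(X(s),s)|\,ds,
\end{equation*}
since $\sqrt{h}(x,v,0)\le Q_0$. Because $Q_t:=\sup\{\sqrt{h}(X(s),V(s),s):s\in[0,t],\,(x,v)\in\supp(f_0)\}$ is nondecreasing, it suffices to bound the two time integrals, \emph{uniformly in the trajectory}, by a quantity of the form $C_1\int_0^t(1+Q_s)\,ds$; Gronwall's lemma then yields $Q_T\le(Q_0+C_1)\exp(C_1(1+T))$. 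Observe that only the plasma field appears in \eqref{ineq:h1}: the singular self-field of the charge has been cancelled by the choice \eqref{def:en} of $h$, which is precisely what makes the scheme work. Recall also that $h$ controls both the velocity, via \eqref{ineq:H2}, and the distance to the charge, through $|X(t)-\xi(t)|\ge 1/h\ge Q_t^{-2}$; in particular all plasma velocities are $\le 2Q_t$.

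Next I would recycle the time-averaging machinery sketched around \eqref{eq:av1}--\eqref{eq:averaging}. Partition $[0,T]$ into consecutive intervals $(t_{i-1},t_i)$ of a small length $\Delta T$ to be fixed later (depending on the current value of $Q$ through a negative power), and on each of them transport $f$ back to time $t_{i-1}$ by Liouville's theorem to obtain
\begin{equation*}
\int_{t_{i-1}}^{t_i}|E(X(t),t)|\,dt\le C\int_{t_{i-1}}^{t_i}\!\!dt\int f(y,w,t_{i-1})\,\frac{dy\,dw}{|X(t)-Y(t)|^2}.
\end{equation*}
The inner integral is then split according to the behaviour of the competing trajectory $(Y,W)$: when the relative velocity $|V(t_{i-1})-w|$ is large and $Y$ stays away from the charge, the difference $X(t)-Y(t)$ essentially performs a free motion, so the time integral \eqref{eq:averaging} is almost explicit and the contribution is $\mathcal O(Q\,\Delta T)$ (plasma--plasma scattering, Lemma~\ref{lemma:scat-plasma-plasma}); when $Y$ comes close to the charge during $(t_{i-1},t_i)$ the stability of the relative velocity fails, but the lapse of time spent near the charge is so short that the contribution is again controlled (charge--plasma scattering, Lemma~\ref{lemma:scatt-plasma-charge}); the remaining ``slow'' or ``close'' bulk of particles is handled statically by Proposition~\ref{prop:prelim1}, the corresponding regions of phase space being so small that their contribution also remains of order $(1+Q)\Delta T$. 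Summing over the $\mathcal O(T/\Delta T)$ subintervals turns the $\mathcal O(Q\,\Delta T)$ per interval into $\mathcal O(Q\,T)$, i.e. a bound linear in $Q$ rather than the useless $4/3$-homogeneous one of the crude \eqref{ineq:P1}.

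The field $\int_0^t|E(\xi(s),s)|\,ds$ along the charge is treated in the very same way: by \eqref{eq:syst2} the charge is itself a characteristic of the plasma field, its velocity is bounded by $\sqrt{2H}$ through \eqref{ineq:eta}, and the identical partition and scattering estimates apply (only the plasma--plasma part being relevant, since the charge does not act on itself). Collecting the two bounds gives
\begin{equation*}
\int_0^t\big(|E(\xi(s),s)|+|E(X(s),s)|\big)\,ds\le C_1\int_0^t(1+Q_s)\,ds,
\end{equation*}
and plugging this into the integrated form of \eqref{ineq:h1}, then taking the supremum over trajectories and over $s\in[0,t]$, yields $Q_t\le Q_0+C_1\int_0^t(1+Q_s)\,ds$, whence the claim by Gronwall after adjusting $C_1$.

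I expect the genuine obstacle to be the charge--plasma scattering step, i.e. the proof of Lemma~\ref{lemma:scatt-plasma-charge}: near the singular point charge the relative velocity of two plasma particles is no longer quasi-stable on $(t_{i-1},t_i)$, so one cannot simply linearize $X(t)-Y(t)$ as a free motion, and the smallness of the time integral \eqref{eq:averaging} has to be extracted from the shortness of the close-encounter window together with the lower bound $|X-\xi|\ge Q^{-2}$. Calibrating $\Delta T$ and the various cut-off radii so that all these estimates hold simultaneously --- in particular so that a given plasma trajectory undergoes at most one close encounter with the charge inside each subinterval --- is the delicate bookkeeping on which the whole argument rests.
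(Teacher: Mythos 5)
Your overall strategy --- integrating \eqref{ineq:h1} along a characteristic, partitioning time into intervals of length $\Delta T$, and controlling the field integrals via Liouville plus the three scattering regimes --- is indeed the paper's strategy; it is the content of Proposition \ref{prop:mainQi} and Lemmas \ref{lemma:fac}--\ref{lemma:scatt-plasma-charge-bis}. But the final step, where you claim the estimate
\begin{equation*}
\int_0^t\big(|E(\xi(s),s)|+|E(X(s),s)|\big)\,ds\le C_1\int_0^t(1+Q_s)\,ds
\end{equation*}
and then invoke Gronwall, contains a genuine gap. The per-interval bounds of Lemmas \ref{lemma:est1} and \ref{lemma:est2} are of the form $CQ\,\Delta T$ with $Q=Q_T$ the \emph{global} supremum, and --- crucially --- the interval length itself is calibrated with this same unknown, $\Delta T=1/(K_2Q_T)$, as are all the cut-offs $R_i=Q_i^{3/4}$, $\delta_i=Q_i^{-7/8}$, $l_i=Q_i^{-2}$. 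So each interval contributes a fixed amount $C/K_2$, and summing over the $\approx K_2Q_Tt$ intervals in $[0,t]$ yields $Q_t\le Q_0+C\,Q_T\,t$: a \emph{self-referential} inequality, not the linear Gronwall hypothesis with the running supremum $Q_s$ under the integral (note $\int_0^tQ_s\,ds\le Q_Tt$ goes the wrong way, so your claimed inequality does not follow from what the machinery provides). The paper closes this loop differently: it fixes $T_0=1/(4C_2)$, which depends only on conserved quantities, and distinguishes two cases --- if $\Delta T_0<T_0$ the iteration of Proposition \ref{prop:mainQi} gives $Q_{T_0}\le Q_0+2C_2T_0Q_{T_0}$, absorbed into $Q_{T_0}\le 2Q_0$; if instead $\Delta T_0\ge T_0$ then directly $Q_{T_0}\le 4C_2K_2^{-1}$. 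One then restarts at time $T_0$ and iterates this doubling $k+1=[T/T_0]+1$ times, obtaining $Q_T\le 2^{T/T_0+1}(Q_0+2C_2K_2^{-1})$, which is precisely the exponential bound. Without this absorption-and-iteration mechanism (or an equivalent continuity/bootstrap argument), your Gronwall step does not go through.

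A secondary inaccuracy: you say the integral $\mathcal{J}_1$ along the charge is handled by treating $\xi$ as a plasma characteristic with ``only the plasma--plasma part being relevant.'' In fact the dangerous denominators there are $|Y(t)-\xi(t)|$, i.e.\ plasma--\emph{charge} distances, and Lemma \ref{lemma:scat-plasma-plasma} is inapplicable to the pair $(\xi,Y)$ since it requires both trajectories to stay outside the protection sphere of radius $\delta_i$ around $\xi$. The paper instead uses Lemma \ref{lemma:fac}, a convexity (virial-type) estimate exploiting the repulsivity of the singular force, $\ddot{\ell}\ge \ell^{-2}-8KQ_i^{4/3}$ with $\ell(t)=|Y(t)-\xi(t)|$, which after integration bounds $\int_{t_{i-1}}^{t_i}\ell^{-2}\,dt$ by $\mathcal{O}(Q_i)$ per interval; the high-energy contribution is then killed by a Chebyshev argument in the energy $h$ together with conservation of $H$.
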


\medskip

As explained earlier, the method relies on an suitable splitting of $[0,T]$ into small intervals. More precisely, we set
\begin{equation*}
 \Delta T=\frac{1}{K_2 Q_T},
\end{equation*}
where $K_2$ denotes a suitable large constant satisfying
\begin{equation*}
 K_2\geq 16\quad \text{and}\quad  \frac{8K}{K_2}<\frac{1}{8},
\end{equation*}
where $K$ (depending only on $\|f_0\|_{\infty}$ and $H$) is the constant appearing in Proposition \ref{prop:prelim1}. 

Next, if $\Delta T< T$ we set
\begin{equation*}
n=\left[\frac{T}{\Delta T}\right],\quad 
t_0=0, \quad t_n=T,\quad t_i=i\Delta T \quad \text{for}\quad i=0,\ldots,n-1,
\end{equation*}
so that
\begin{equation*}
 [0,T]=\bigcup_{i=1}^n [t_{i-1},t_i]\quad \text{with}\quad |t_i-t_{i-1}|\leq \Delta T.  
\end{equation*}

For $i=1,\ldots,n$ we define 
\begin{equation*}
 Q_i=\sup\left\{\sqrt{h}\left( X(t),V(t),t\right) \sep \quad t\in (t_{i-1},t_i),(x,v)\in \supp(f(t_{i-1}))\right\}
\end{equation*}
where here 
$\left(X(t),V(t)\right)=\left( X(x,v,t_{i-1},t),V(x,v,t_{i-1},t)\right)$ are the trajectories at time $t\geq t_{i-1}$,
leaving $(x,v)$ at time $t_{i-1}$. Finally, we set
\begin{equation*}
 Q_0=\sup\left \{\sqrt{h}(x,v,0) \sep \quad (x,v)\in \supp(f_0)\right\}.
\end{equation*}

In order to show Proposition \ref{prop:mainQ} we will first establish the following basic inequality
\begin{prop}
\label{prop:mainQi}Let $T>0$ such that $\Delta T< T$.
 We have
\begin{equation*}
Q_i\leq Q_{i-1}+C_2 Q_T \Delta T,\quad i=1,\ldots,n.
\end{equation*}
\end{prop}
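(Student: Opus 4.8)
The plan is to integrate the pointwise differential inequality \eqref{ineq:h1} over a single subinterval and to reduce everything to controlling two time-integrals of the electric field. Fix $i$ and a characteristic $(X(t),V(t))=(X(x,v,t_{i-1},t),V(x,v,t_{i-1},t))$ with $(x,v)\in\supp(f(t_{i-1}))$. Integrating \eqref{ineq:h1} from $t_{i-1}$ to $t\in(t_{i-1},t_i)$ yields
\begin{equation*}
\sqrt{h}(X(t),V(t),t)\leq \sqrt{h}(x,v,t_{i-1})+\int_{t_{i-1}}^{t_i}\left(|E(\xi(s),s)|+|E(X(s),s)|\right)\,ds.
\end{equation*}
For the first term, each $(x,v)\in\supp(f(t_{i-1}))$ is the image at time $t_{i-1}$ of a point of $\supp(f(t_{i-2}))$ (of $\supp(f_0)$ when $i=1$), so by continuity of the flow its energy $\sqrt{h}(x,v,t_{i-1})$ is a limit of values already counted in $Q_{i-1}$; hence $\sqrt{h}(x,v,t_{i-1})\leq Q_{i-1}$. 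Taking the supremum over $t$ and over $(x,v)$ thus reduces the proposition to the bound
\begin{equation*}
\int_{t_{i-1}}^{t_i}|E(\xi(s),s)|\,ds+\sup_{(x,v)\in\supp(f(t_{i-1}))}\int_{t_{i-1}}^{t_i}|E(X(s),s)|\,ds\leq C_2\,Q_T\,\Delta T.
\end{equation*}

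The central point is that the naive static estimate is \emph{not} enough, so the gain must come from averaging in time. Indeed \eqref{ineq:H2} forces every velocity on the support to satisfy $|V|\leq 2Q_T$, so $\rho(\cdot,s)=\rho_{2Q_T}(\cdot,s)$ and Proposition \ref{prop:prelim1} gives the pointwise bound $|E(z,s)|\leq K(2Q_T)^{4/3}$. This only produces a time integral of order $Q_T^{4/3}\Delta T$, which overshoots the target $Q_T\Delta T$ by a factor $Q_T^{1/3}$. Following \eqref{eq:av1} I would therefore use Liouville's theorem to rewrite the time integral of $|E|$ along any trajectory $Z(s)$ (the plasma characteristic, or the charge $\xi$) as a phase-space average
\begin{equation*}
\int_{t_{i-1}}^{t_i}|E(Z(s),s)|\,ds\leq C\int f(y,w,t_{i-1})\left(\int_{t_{i-1}}^{t_i}\frac{ds}{|Z(s)-Y(s)|^2}\right)\,dy\,dw,
\end{equation*}
where $(Y,W)(s)$ is the characteristic issued from $(y,w)$ at time $t_{i-1}$, and then estimate the inner time integral \eqref{eq:averaging}.

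Next I would split the $(y,w)$-integration according to the size of the relative velocity $V(t_{i-1})-w$ and the proximity of the trajectories to the charge $\xi$. Slow particles away from the charge are handled directly by the static estimate of Proposition \ref{prop:prelim1}. For particles with large relative velocity and away from the charge, the choice $\Delta T=1/(K_2Q_T)$ with $K_2$ large guarantees \emph{stability}: since the acceleration is of order $Q_T^{4/3}$, over the subinterval the relative velocity changes only by $\mathcal{O}(Q_T^{1/3}/K_2)$, so $Z(s)-Y(s)$ performs an essentially free motion and \eqref{eq:averaging} can be computed explicitly, its phase-space average being of order $\mathcal{O}(Q_T\Delta T)$ (the plasma--plasma scattering, cf. Lemma \ref{lemma:scat-plasma-plasma}); the constraints $K_2\geq 16$ and $8K/K_2<1/8$ are exactly what lets the constants produced here close the self-consistent estimate. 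For particles that approach the charge, stability fails, but the conserved pointwise energy $h$ of \eqref{def:en} controls $1/|X-\xi|$ and hence bounds both how close a trajectory gets to $\xi$ and how briefly it lingers there, so that the contribution of the near-charge times to \eqref{eq:averaging} is again $\mathcal{O}(Q_T\Delta T)$ (the plasma--charge scattering, cf. Lemma \ref{lemma:scatt-plasma-charge}). The same analysis applies verbatim to the integral along the charge trajectory $\xi(s)$.

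Collecting the contributions bounds both field integrals by $C_2Q_T\Delta T$, which is the claim. The hard part is the plasma--charge regime: the singular field $\frac{X-\xi}{|X-\xi|^3}$ can deflect a plasma trajectory arbitrarily fast, so the free-motion picture underlying the plasma--plasma estimate collapses. The technical heart is to quantify, via the energy \eqref{def:en}, that the set of times at which $X(s)$ is within a given small distance of $\xi(s)$ has small enough measure to keep \eqref{eq:averaging} of order $Q_T\Delta T$ in spite of the blow-up of the integrand, and to carry the bookkeeping so that the accumulated constants respect the smallness encoded in $8K/K_2<1/8$ uniformly over all subintervals. Once this is in place, iterating Proposition \ref{prop:mainQi} over $i=1,\ldots,n$ yields the exponential bound of Proposition \ref{prop:mainQ}.
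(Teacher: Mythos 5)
Your overall architecture (integrate \eqref{ineq:h1}, use $\sqrt{h}(x,v,t_{i-1})\leq Q_{i-1}$, reduce to the two field integrals, then Liouville plus a splitting into static, plasma--plasma and plasma--charge regimes) matches the paper's. But there is a genuine gap in the reduction step: you demand the bound $\int_{t_{i-1}}^{t_i}|E(X(s),s)|\,ds\leq CQ_T\Delta T$ \emph{uniformly over all} $(x,v)\in\supp(f(t_{i-1}))$, and the method cannot deliver this. A low-energy plasma particle can sit inside the protection sphere $B(\xi(t),\delta_i)$, nearly comoving with the charge, for the whole subinterval: your own virial mechanism ("the energy bounds how briefly it lingers there") produces a short sojourn time only under the hypothesis $\sqrt{h}(y,w,t_{i-1})>R_i$ (it is what makes $\ddot I\gtrsim R_i^2$ in Lemma \ref{lemma:scatt-plasma-charge}), so for such a trajectory only the static bound $|E|\leq KQ^{4/3}$ is available, giving $Q^{4/3}\Delta T$ --- exactly the overshoot by $Q^{1/3}$ you identified. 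The paper's proof circumvents this by first proving the one-step stability estimate \eqref{ineq:H3} and its consequence \eqref{ineq:che}: any trajectory attaining the supremum $Q_i$ must satisfy $\sqrt{h}(x,v,t_{i-1})>Q_i/2$, and Lemma \ref{lemma:est2} is then proved \emph{only} for such high-energy trajectories. This hypothesis is used essentially, twice: Lemma \ref{lemma:scatt-plasma-charge-bis} applied to $X$ itself confines the near-charge times to one tiny interval of measure $O(Q_i^{-15/8})$, and outside it the bound $|X(t)-\xi(t)|>2\delta_i$ supplies the lower bound $|X(t)-Y(t)|\geq\delta_i$ on the set $S_3$, without which the $Y$-near-charge contribution is uncontrollable. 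This restriction-to-extremal-trajectories step is missing from your argument and is not cosmetic.

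A second, related problem is your claim that the analysis applies "verbatim" to the charge integral $\mathcal{J}_1$. For the pair $(Y,\xi)$ the singular force acts on the relative motion itself, so the free-motion/stability picture of Lemma \ref{lemma:scat-plasma-plasma} fails precisely in the dangerous regime, and combining your time-measure bound \eqref{est:m1} with the worst-case distance $|Y-\xi|\gtrsim Q^{-2}$ (the only lower bound the energy gives) yields $Q^{4}\cdot Q^{-13/8}=Q^{19/8}$ before the Chebyshev gain $Q^{-3/2}$, i.e.\ $Q^{7/8}$ --- far too large. The paper instead exploits repulsivity directly: in Lemma \ref{lemma:fac} the convexity inequality $\ddot{\ell}\geq \ell^{-2}-8KQ_i^{4/3}$ is integrated to give $\int_{t_{i-1}}^{t_i}dt\,\ell(t)^{-2}\leq (2\sqrt{2}+1)Q_i$ outright, and then Chebyshev with conservation of the total energy $H$ handles the phase-space measure of $\{h>R_i^2\}$. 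That ingredient has no analogue in your sketch. Two smaller points: $h$ is not conserved (only its derivative cancels the singular field, \eqref{ineq:h1}; stability over $\Delta T$ is Lemma \ref{lemma:stabh}), and the harmless case $Q_i\leq K_3$ should be disposed of first, since the constants in the virial step absorb $K_1$ and $KQ_i^{4/3}$ only when $Q_i$ is large.
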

We claim that Proposition \ref{prop:mainQ} follows from Proposition \ref{prop:mainQi}. Indeed, let us set 
$T_0=1/(4C_2)$. There are two cases.

If $\Delta T_0=1/(K_2Q_{T_0})<T_0$ then Proposition \ref{prop:mainQi} for $T_0$ implies that for all $i=1,\ldots,n$
\begin{equation*}
 \begin{split}
  Q_i\leq Q_0+C_2iQ_{T_0}\Delta T_0\leq Q_0+2C_2 T_0 Q_{T_0},
 \end{split}
\end{equation*}
hence
\begin{equation*}
 Q_{T_0}\leq \frac{Q_0}{1-2C_2 T_0}=2 Q_0.
\end{equation*}

Otherwise we have $\Delta T_0\geq T_0$, which means that
\begin{equation*}
 \begin{split}
  Q_{T_0}\leq \frac{1}{T_0K_2}= 4C_2 K_2^{-1}.
 \end{split}
\end{equation*}

In both cases we obtain
\begin{equation*}
\label{ineq:star}
 Q_{T_0}\leq 2Q_0+4C_2 K_2^{-1},
\end{equation*}
thus Proposition \ref{prop:mainQ} holds up to time $T_0$. Let now $T>T_0$ and $k=[\frac{T}{T_0}]$.
Since $T_0$ depends only on conserved quantities, we can iterate the previous arguments $k+1$ times to get
\begin{equation*}
\begin{split}
 Q_{T}\leq Q_{(k+1)T_0}\leq 2^{k+1}Q_0+4C_2 K_2^{-1}\sum_{j=0}^k 2^j\leq 2^{T/T_0+1}(Q_0+2C_2K_2^{-1})
\end{split}
\end{equation*}
and the conclusion follows.

\bigskip

We now come to the main ingredients for proving Proposition \ref{prop:mainQi}.
We observe preliminary that, without loss of generality, we may assume
\begin{equation}
\label{ineq:assumption}
 Q_i\geq K_3\geq 1
\end{equation}
where $K_3$ is a constant depending only on $K_1$ and $K$ (therefore only on $\|f\|_{\infty}$ and $H$) 
which will be specified in the course of
the proof of Lemma \ref{lemma:scatt-plasma-charge} below. Indeed, otherwise we have
$$Q_i\leq K_3\leq K_3K_2\frac{1}{K_2}\leq Q_{i-1}+C_2Q\Delta T$$
provided that $C_2\geq K_3 K_2$, and Proposition \ref{prop:mainQi} follows.

Next, we notice that, 
by virtue of \eqref{ineq:h1}, 
\eqref{ineq:H2} and \eqref{ineq:rho3} we have
\begin{equation}
 \label{ineq:H3}
\sqrt{h}\left(X(t),V(t),t\right)\leq \sqrt{h}(x,v,t_{i-1})+8K Q_i^{4/3}\Delta T
\end{equation}
for $t\in [t_{i-1},t_i]$ and $(x,v)\in \supp(f(t_{i-1}))$. Now, consider a trajectory $\left(X(t),V(t)\right)$ satisfying 
$$\sqrt{h}\left(X(\overline{t}),V(\overline{t}),\overline{t}\right)=Q_i$$ 
for some $\overline{t}\in[t_{i-1},t_{i}]$. We then have by definition of $\Delta T$
\begin{equation}
\label{ineq:che}
 \sqrt{h}(x,v,t_{i-1})\geq Q_i-\frac{8K}{K_2}Q_i^{1/3}\geq \frac{Q_i}{2}.
\end{equation}
Therefore to control $Q_i$ it suffices to control the energy of those trajectories for which \eqref{ineq:che} holds.

\medskip

In order to bound the time integral on the right-hand side of \eqref{ineq:h1} we have to evaluate the integrals
\begin{equation}
\label{eq:int}
 \int_{t_{i-1}}^{t_i} \frac{dt}{|X(t)-Y(t)|^2}\quad \text{and} \quad \int_{t_{i-1}}^{t_i} \frac{dt}{|\xi(t)-Y(t)|^2}
\end{equation}
for high energy trajectories $X(t)$, $Y(t)$ which could possibly make very small the denominators in \eqref{eq:int}. There are
various situations:
\begin{enumerate}
\item[(i)] Both $X$ and $Y$ are far from $\xi$ (Lemma \ref{lemma:scat-plasma-plasma}),
 \item[(ii)] $X$ is close to $\xi$ (Lemmas \ref{lemma:scatt-plasma-charge} and \ref{lemma:scatt-plasma-charge-bis}),
\item[(iii)] $X$ is far from $\xi$ but $Y$ is close to $\xi$, hence $X$ and $Y$ are far from each other (Lemma \ref{lemma:est2}).
\end{enumerate}
We shall handle each of these situations separately, achieving thereby the dynamical part of the proof. The remainder of the 
proof relies on rather straightforward estimates in phase-space.

\subsection{Preliminary estimates}

We start by establishing a lemma concerning the plasma-charge
scattering. 

\begin{lemma}
\label{lemma:fac}
 For any $(y,w)\in \supp(f(t_{i-1}))$ we have, with $(Y,W)(t)=(Y, W)(y,w,t_{i-1},t)$ 
\begin{equation*}
\int_{t_{i-1}}^{t_i} \frac{dt}{|Y(t)-\xi(t)|^2}\leq (2\sqrt{2}+1)Q_i.
\end{equation*}

\end{lemma}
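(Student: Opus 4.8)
The plan is to pass to the relative coordinate $r(t)=Y(t)-\xi(t)$, set $\rho(t)=|r(t)|$, and derive a radial (virial-type) identity in which the singular Coulomb self-force reproduces exactly the integrand $\rho^{-2}$ we must control. Using \eqref{eq:syst1}--\eqref{eq:syst2} one has $\dot r=W-\eta$ and $\ddot r=E(Y,t)-E(\xi,t)+r/\rho^3$, and since $r\cdot r/\rho^3=1/\rho$ a direct computation of $\ddot\rho=\frac{d}{dt}\big(r\cdot\dot r/\rho\big)$ gives
\[
\ddot\rho=\frac{1}{\rho}\big(|\dot r|^2-\dot\rho^2\big)+\frac{1}{\rho^2}+\frac{r\cdot\big(E(Y,t)-E(\xi,t)\big)}{\rho}.
\]
Because $|\dot r|^2-\dot\rho^2$ is the squared tangential relative velocity, it is nonnegative, and since $\big|r\cdot(E(Y,t)-E(\xi,t))\big|/\rho\le|E(Y,t)-E(\xi,t)|$ we obtain the pointwise bound
\[
\frac{1}{\rho^2}\le \ddot\rho+\big|E(Y,t)-E(\xi,t)\big|.
\]
The structural point, mirroring \eqref{ineq:h1} for $h$, is that the singular interaction has been absorbed into the exact derivative $\ddot\rho$, leaving only the smooth field; this is what makes the estimate possible.

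Integrating over $[t_{i-1},t_i]$, the first term telescopes to $\dot\rho(t_i)-\dot\rho(t_{i-1})$. Here I would bound $|\dot\rho|\le|\dot r|=|W-\eta|\le\sqrt{2h}\le\sqrt2\,Q_i$ on the interval, using $h\ge|W-\eta|^2/2$ and the definition of $Q_i$; this contributes the leading term $2\sqrt2\,Q_i$.

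It remains to absorb the field integral into the trailing $+\,Q_i$, and this I expect to be the one delicate point, since a naive bound on $|E|$ would involve the global $Q_T$ rather than $Q_i$. The key observation is that every trajectory in $\supp(f(t))$ for $t\in(t_{i-1},t_i)$ issues from $\supp(f(t_{i-1}))$, so by \eqref{ineq:H2} its speed is at most $2Q_i$; hence the maximal velocity on the subinterval satisfies $P(t)\le 2Q_i$. Estimating $|E(z,t)|\le\int \rho_{P(t)}(x',t)|z-x'|^{-2}\,dx'\le K\,(2Q_i)^{4/3}$ by Proposition \ref{prop:prelim1} and recalling $\Delta T=1/(K_2Q_T)$, I get
\[
\int_{t_{i-1}}^{t_i}\big|E(Y,t)-E(\xi,t)\big|\,dt\le 2^{7/3}\frac{K}{K_2}\,\frac{Q_i^{4/3}}{Q_T}.
\]
Since $Q_i\le Q_T$ and $Q_i\ge1$, the right-hand side is at most $2^{7/3}(K/K_2)Q_i$, and the constraint $8K/K_2<1/8$ forces this to be $\le Q_i$. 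Combining with the boundary term yields $\int_{t_{i-1}}^{t_i}\rho^{-2}\,dt\le(2\sqrt2+1)Q_i$. The whole argument is thus the elementary radial identity together with the energy control of $Q_i$; the only care needed is the bookkeeping of constants so that the smooth-field contribution lands below $Q_i$ rather than below a multiple of $Q_T$.
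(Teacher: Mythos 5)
Your proof is correct and is essentially the paper's own argument: both differentiate $\ell(t)=|Y(t)-\xi(t)|$ twice, use Cauchy--Schwarz to discard the nonnegative tangential term so that $1/\ell^2\leq \ddot{\ell}+|E(Y,t)-E(\xi,t)|$, then integrate, bounding the boundary terms by $|\dot{\ell}|\leq |W-\eta|\leq \sqrt{2}\,Q_i$ and absorbing the smooth-field contribution of order $KQ_i^{4/3}\Delta T$ into a single $Q_i$ via Proposition \ref{prop:prelim1}, the speed bound \eqref{ineq:H2}, $Q_i\leq Q_T$, and the choice $\Delta T=1/(K_2Q_T)$ with $8K/K_2<1/8$. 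The only cosmetic difference is your slightly sharper constant $2^{7/3}$ where the paper bounds crudely by $8$.
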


\begin{proof}
 Setting $\ell(t)=|Y(t)-\xi(t)|$, we differentiate 
\begin{equation}
 \label{eq:l'}
\dot{\ell}=\frac{(Y-\xi)}{|Y-\xi|}\cdot(W-\eta),
\end{equation}
then 
\begin{equation*}
\begin{split}
\ddot{\ell}=\frac{|W-\eta|^2}{|Y-\xi|}+\frac{1}{\ell^2}+
\frac{(Y-\xi)}{|Y-\xi|}\cdot\left(E(Y)-E(\xi)\right)
-\frac{\left[(Y-\xi)\cdot(W-\eta)\right]^2}{|Y-\xi|^3}.
\end{split}
\end{equation*}
 Proposition \ref{prop:prelim1} and Cauchy-Schwarz inequality yield
\begin{equation*}
\ddot{\ell}\geq \frac{1}{\ell^2}-8K Q_i^{4/3}.
\end{equation*}
Therefore
\begin{equation*}
 \begin{split}
  \int_{t_{i-1}}^{t_i} dt\,\frac{1}{\ell^2(t)}&\leq \dot{\ell}(t_i)-\dot{\ell}(t_{i-1})+8K \Delta T Q_i^{4/3}\\
&\leq |W-\eta|(t_i)+|W-\eta|(t_{i-1})+\frac{8K}{K_2} Q_i^{1/3}.
 \end{split}
\end{equation*}
By definition of $K_2$ and since the first two terms in the right-hand side of the previous inequality are bounded by 
$\sqrt{2}Q_i$ we conclude the proof.
\end{proof}
We now introduce the quantities
\begin{equation}
 \label{def:1}
R_i=Q_i^{3/4}\quad \text{and}\quad \delta_i=Q_i^{-7/8}.
\end{equation}
Note that $R_i$ corresponds to the maximal radius in the velocity space for which \eqref{ineq:rho3} actually works, 
namely yields a linear estimate in $Q_i$.

As already mentionned, the choice of the parameters ensures stability for the quantity $\sqrt{h}$.
\begin{lemma}
 \label{lemma:stabh}Let $(y,w)\in \supp(f(t_{i-1}))$
and $(Y,W)(t)=\left(Y,W\right)(y,w,t_{i-1},t)$. 
 
If $\sqrt{h}(y,w,t_{i-1})\geq  R_i$ then
\begin{equation}
\label{ineq:stabh1}
 \sqrt{h}\left(Y(t),W(t),t\right)\geq \frac{1}{2}R_i,\quad \forall t\in [t_{i-1},t_i].
\end{equation}
 
 If $\sqrt{h}(y,w,t_{i-1})\leq  R_i$ then
\begin{equation}
\label{ineq:stabh2}
 \sqrt{h}\left(Y(t),W(t),t\right)\leq \frac{3}{2}R_i,\quad \forall t\in [t_{i-1},t_i].
\end{equation}

\end{lemma}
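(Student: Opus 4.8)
The plan is to integrate the differential inequality \eqref{ineq:h1} along the trajectory $(Y,W)$ over one subinterval $[t_{i-1},t]$ and to show that the resulting variation of $\sqrt{h}$ is of order $Q_i^{1/3}$, which is negligible compared with $R_i=Q_i^{3/4}$. Both conclusions \eqref{ineq:stabh1} and \eqref{ineq:stabh2} then follow from a one-line comparison of these two powers of $Q_i$.

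First I would bound the electric field along the trajectory. Since every trajectory issued from $\supp(f(t_{i-1}))$ satisfies $\sqrt{h}\leq Q_i$ on $(t_{i-1},t_i)$, estimate \eqref{ineq:H2} shows that for all $t\in[t_{i-1},t_i]$ the velocity support of $\rho(\cdot,t)$ is contained in the ball of radius $2Q_i$, so that $\rho(\cdot,t)=\rho_{2Q_i}(\cdot,t)$. Consequently Proposition \ref{prop:prelim1} yields, for every point $z$,
$$|E(z,t)|\leq \int\frac{\rho_{2Q_i}(x',t)}{|z-x'|^2}\,dx'\leq K(2Q_i)^{4/3}.$$
Applying this to $z=\xi(t)$ and to $z=Y(t)$ and inserting the result into \eqref{ineq:h1} gives the pointwise derivative bound $\big|\frac{d}{dt}\sqrt{h}(Y(t),W(t),t)\big|\leq 8K Q_i^{4/3}$ (since $2\cdot 2^{4/3}<8$).

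Next I would integrate this over $[t_{i-1},t]\subseteq[t_{i-1},t_i]$ and use the definition $\Delta T=1/(K_2Q_T)$ together with $Q_i\leq Q_T$ to obtain
$$\big|\sqrt{h}(Y(t),W(t),t)-\sqrt{h}(y,w,t_{i-1})\big|\leq 8K Q_i^{4/3}\Delta T\leq \frac{8K}{K_2}\,Q_i^{1/3}<\frac{1}{8}\,Q_i^{1/3},$$
where the final inequality uses the constraint $8K/K_2<1/8$ imposed on $K_2$. This is the decisive quantitative gain: a single time step moves $\sqrt{h}$ by at most $\tfrac18 Q_i^{1/3}$. To conclude I would compare this with $R_i=Q_i^{3/4}$: because $Q_i\geq K_3\geq 1$ by \eqref{ineq:assumption} and $3/4>1/3$, we have $\tfrac18 Q_i^{1/3}\leq \tfrac12 Q_i^{3/4}=\tfrac12 R_i$. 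Hence, if $\sqrt{h}(y,w,t_{i-1})\geq R_i$ then $\sqrt{h}(Y(t),W(t),t)\geq R_i-\tfrac12 R_i=\tfrac12 R_i$, giving \eqref{ineq:stabh1}; and if $\sqrt{h}(y,w,t_{i-1})\leq R_i$ then $\sqrt{h}(Y(t),W(t),t)\leq R_i+\tfrac12 R_i=\tfrac32 R_i$, giving \eqref{ineq:stabh2}.

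The only point requiring care is the self-consistent field bound in the second step: one must first use the \emph{definition} of $Q_i$ (via \eqref{ineq:H2}) to control the velocity support of $\rho$ uniformly on the whole subinterval, and only then invoke Proposition \ref{prop:prelim1}; invoking the field estimate trajectory-by-trajectory without this uniform control would be circular. Everything after that is an elementary comparison of the exponents $1/3$ and $3/4$, made to work precisely by the choices $\Delta T\sim 1/Q_T$ and $R_i=Q_i^{3/4}$.
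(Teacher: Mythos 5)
Your proof is correct and follows essentially the same route as the paper's one-line argument: integrate \eqref{ineq:h1} using the field bound $8KQ_i^{4/3}$ (obtained from \eqref{ineq:H2} and Proposition \ref{prop:prelim1}) and observe that the choice of $\Delta T$ gives $8KQ_i^{4/3}\Delta T\leq R_i/2$. You merely spell out the details the paper leaves implicit (the uniform velocity-support control of $\rho$, the use of $Q_i\leq Q_T$, and $Q_i\geq 1$ via \eqref{ineq:assumption}).
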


\begin{proof}
 This is a straightforward consequence of \eqref{ineq:h1}, observing that by choice of $\Delta T$ we have $8KQ_i^{4/3}\Delta T\leq R_i/2$.
\end{proof}

The quantity $\delta_i$ is the radius of a protection sphere around the charge $\xi$, outside which the singular field created
by $\xi$ is relatively moderate, namely $\mathcal{O}(Q_i^{7/4})$.

The next lemma deals with the plasma-plasma scattering when the influence of the charge is not too large. The control of the 
corresponding time integral is the same as in \cite{Pf}, \cite{Sh}, \cite{Wo} and \cite{G}. We shortly repeat it 
for completeness
and also because, for the present problem, we need a different choice of the parameters.

\begin{lemma}
\label{lemma:scat-plasma-plasma}Let $l_i>0$.
 Assume that there exists a time interval $$J=(\overline{t}_{i-1},\overline{t}_i)\subset (t_{i-1},t_i)$$ such that, 
for all $t\in J$
we have, with $\left(X,V\right)(t)=\left(X,V\right)(x,v,t_{i-1},t)$ and 
$\left(Y,W\right)(t)=\left(Y,W\right)(y,w,t_{i-1},t)$, where $(x,v)$ and $(y,w)\in \supp(f(t_{i-1}))$ 
\begin{equation}
 \label{ineq:scat1}
\inf_{t\in J}\min \left\{ |X(t)-\xi(t)|,|Y(t)-\xi(t)|\right\}> \delta_i
\end{equation}
and
\begin{equation}
\label{ineq:scat3} 
|V(\overline{t}_{i-1})-W(\overline{t}_{i-1})|>R_i.
\end{equation}
Then
\begin{equation}
 \label{ineq:lemma1}
\int_{\overline{t}_{i-1}}^{\overline{t}_i} dt\frac{\chi(|X(t)-Y(t)|>l_i)}{|X(t)-Y(t)|^2}\leq \frac{C_3}{l_i R_i}.
\end{equation}
\end{lemma}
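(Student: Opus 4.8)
The plan is to exploit the fact that, on the interval $J$ where both trajectories stay at distance $>\delta_i$ from the charge, the relative motion $D(t)=X(t)-Y(t)$ is governed only by the smooth field $E$ and the (moderate, since both are far from $\xi$) singular contributions, so that the relative velocity cannot change much. The key geometric idea, borrowed from \cite{Pf}, \cite{Sh}, \cite{Wo}, is that a trajectory with large relative velocity essentially performs a quasi-free motion, and the time spent with $|D(t)|$ small is correspondingly short; integrating $|D(t)|^{-2}$ against the cutoff $\chi(|D(t)|>l_i)$ then produces the bound $C_3/(l_iR_i)$.

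First I would set $D(t)=X(t)-Y(t)$ and study $g(t)=|D(t)|^2$. Differentiating twice gives $\ddot g = 2|\dot D|^2 + 2 D\cdot \ddot D$, where $\ddot D$ collects the difference of the smooth fields $E(X)-E(Y)$ and the two singular terms $\tfrac{X-\xi}{|X-\xi|^3}-\tfrac{Y-\xi}{|Y-\xi|^3}$. Using \eqref{eq:almost-lipschitz} for the smooth part and the lower bound \eqref{ineq:scat1}, namely $|X-\xi|,|Y-\xi|>\delta_i$, together with $\delta_i=Q_i^{-7/8}$ from \eqref{def:1}, each singular term is bounded by $\delta_i^{-2}=Q_i^{7/4}$, so $|\ddot D|$ is controlled by a power of $Q_i$ that, against the lower bound $R_i=Q_i^{3/4}$ on the relative velocity from \eqref{ineq:scat3}, is dominated over the short interval of length $\Delta T=1/(K_2Q_i)$. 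The point is that the relative velocity $|\dot D(t)|=|V(t)-W(t)|$ stays comparable to its initial value $>R_i$ throughout $J$: I would show $|\dot D(t)|\geq \tfrac12 R_i$ for $t\in J$ by integrating the bound on $|\ddot D|$ over an interval of length at most $\Delta T$ and checking $\|\ddot D\|_\infty\,\Delta T\le R_i/2$, exactly as the choice of $K_2$ guarantees (the same mechanism underlies Lemma \ref{lemma:stabh}).

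With the relative speed pinned below by $\tfrac12 R_i$, the motion of $D$ is essentially ballistic, so along $J$ one has the elementary scattering estimate
\begin{equation*}
\int_{\overline t_{i-1}}^{\overline t_i}\frac{\chi(|D(t)|>l_i)}{|D(t)|^2}\,dt\le \frac{C}{\inf_{J}|\dot D|}\cdot\frac{1}{l_i}\le \frac{C_3}{l_iR_i},
\end{equation*}
which is the claimed \eqref{ineq:lemma1}. The cleanest way to get this is to reduce to the one-dimensional picture: write $|D(t)|^2$ as a near-quadratic function of $t$ whose second derivative is close to $2|\dot D|^2\ge \tfrac12 R_i^2$, so that the graph of $|D(t)|$ against the moving frame crosses any fixed level at controlled speed; the time during which $|D(t)|$ lies in a dyadic shell $(2^{k}l_i,2^{k+1}l_i)$ is at most $O(2^{k}l_i/R_i)$, and summing the resulting geometric series $\sum_k 2^{-k}/(l_iR_i)$ gives $C_3/(l_iR_i)$.

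The main obstacle I anticipate is the rigorous justification that $D(t)$ is genuinely quasi-free, i.e. controlling the curvature term $D\cdot\ddot D$ uniformly: one must verify that the singular forces, although individually as large as $Q_i^{7/4}$, do not destroy the lower bound on $|\dot D|$ over the whole subinterval $J$. This is precisely where the scaling relations $R_i=Q_i^{3/4}$, $\delta_i=Q_i^{-7/8}$ and $\Delta T=1/(K_2Q_i)$ must be balanced against one another, and it is the reason the authors remark that a \emph{different choice of parameters} is needed compared with the charge-free case. Once the speed bound survives on $J$, the remaining shell-by-shell integration is routine.
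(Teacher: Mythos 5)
Your overall strategy (quasi-free relative motion, stability of the relative velocity through the scaling of $R_i$, $\delta_i$ and $\Delta T$, then integration of $|D(t)|^{-2}$) is indeed the one used in the paper, and your stability step $|\dot D(t)|\geq \tfrac12 R_i$ on $J$ is correct and matches the paper's estimate $|\dot D(t_0)|\geq R_i-(2Q_i^{7/4}+8KQ_i^{4/3})\Delta T\geq R_i/2$. But the final integration step has a genuine gap. A lower bound on the relative \emph{speed} $|\dot D|$ does not by itself bound the time spent in a shell $\{2^k l_i<|D|<2^{k+1}l_i\}$ by $O(2^k l_i/R_i)$: the motion of $D$ could be essentially tangential, with $\frac{d}{dt}|D(t)|$ small while $|\dot D(t)|$ is large, so the "crossing at controlled speed" picture needs justification. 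Your proposed remedy --- that $g=|D|^2$ is near-quadratic because $\ddot g=2|\dot D|^2+2D\cdot\ddot D$ with $2|\dot D|^2\geq \tfrac12 R_i^2$ --- fails as soon as $|D|$ is not small: since $|\ddot D|\leq 2\delta_i^{-2}+CKQ_i^{4/3}\sim Q_i^{7/4}$, the neglected term $2D\cdot\ddot D$ dominates $R_i^2=Q_i^{3/2}$ whenever $|D|\gg Q_i^{-1/4}$, and nothing in the hypotheses prevents $|D|$ from being of order one on $J$ (note $\delta_i=Q_i^{-7/8}\ll Q_i^{-1/4}$, so staying outside the protection sphere gives no help). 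Hence convexity of $g$ is not available on all of $J$, the single-minimum structure is not established, and your dyadic shell count collapses exactly on the intermediate shells.

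The paper's proof supplies the missing idea. Instead of differentiating $|D|^2$, one Taylor-expands $D$ itself around the minimizer $t_0$ of $|D(t)|$ on $[\overline{t}_{i-1},\overline{t}_i]$, writing $D(t)=D(t_0)+\dot D(t_0)(t-t_0)+\rho(t)$, and bounds the Duhamel remainder by $|\rho(t)|\leq \tfrac12(t-t_0)\left(2Q_i^{7/4}+8KQ_i^{4/3}\right)\Delta T\leq \tfrac18 R_i(t-t_0)$; crucially this uses only the $L^\infty$ bound on $\ddot D$, never the product $D\cdot \ddot D$, so it is uniform in $|D|$. Then the orthogonality $D(t_0)\cdot\dot D(t_0)=0$ at an interior minimum (or the sign condition $D(t_0)\cdot\dot D(t_0)\geq 0$ when $t_0=\overline{t}_{i-1}$, with time reversal for the remaining cases) gives $|D(t_0)+\dot D(t_0)(t-t_0)|\geq |\dot D(t_0)|(t-t_0)$, whence the global cone bound $|D(t)|\geq \tfrac38 R_i|t-t_0|$ on $J$, which in particular rules out multiple dips; splitting the time integral at $|t-t_0|\sim l_i/R_i$ then yields \eqref{ineq:lemma1} directly, with no shell decomposition needed. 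Your write-up contains neither the choice of the minimizer nor the orthogonality/sign argument, and these are precisely what make the estimate rigorous. A secondary point: in bounding the smooth part of $\ddot D$ you should not invoke the almost-Lipschitz estimate \eqref{eq:almost-lipschitz}, whose constant depends on $\|\rho(t)\|_{L^\infty}$ and is not yet under control at this stage of the bootstrap; the paper simply uses $|E(X)-E(Y)|\leq |E(X)|+|E(Y)|\leq 8KQ_i^{4/3}$ from Proposition \ref{prop:prelim1}, which is all the remainder estimate requires.
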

\begin{remark}
 We shall use this estimate with the choice $l_i=Q_i^{-2}$.
\end{remark}

\begin{proof}
 Let $t_0\in [\overline{t}_{i-1},\overline{t}_i]$ be the minimizer of $|X(t)-Y(t)|$. Suppose for the moment that
$t_0\in (\overline{t}_{i-1},\overline{t}_i)$. Setting $D(t)=X(t)-Y(t)$, we have for $t\in (t_0,\overline{t}_i)$
\begin{equation}
 \label{eq:D}
D(t)=D(t_0)+\dot{D}(t_0)(t-t_0)+\rho(t),
\end{equation}
where
\begin{equation}
\begin{split}
 \label{eq:reste}
\rho(t)=\int_{t_0}^t ds\,(t-s)\Big( \frac{X(s)-\xi(s)}{|X(s)-\xi(s)|^3}&- \frac{Y(s)-\xi(s)}{|Y(s)-\xi(s)|^3}\\
&+E\left(X(s),s\right)-E\left(Y(s),s\right)\Big).
\end{split}
\end{equation}
By virtue of \eqref{ineq:scat1}, Proposition \ref{prop:prelim1} and the 
definition \eqref{def:1} of $R_i$ and $\delta_i$ we have
\begin{equation*}
\begin{split}
|\rho(t)|&\leq \frac{1}{2}(t-t_0)\left( 2 Q_i^{7/4}+8K Q_i^{4/3}\right)\Delta T\\
&\leq \frac{1}{2} (t-t_0)\left(\frac{2+8K}{K_2}\right) Q_i^{3/4}\\
&\leq \frac{1}{8}(t-t_0)Q_i^{3/4}, 
\end{split}
\end{equation*}
hence
\begin{equation}
 \label{ineq:reste}
|\rho(t)|\leq \frac{1}{8}(t-t_0)R_i.
\end{equation}

On the other hand, by \eqref{ineq:scat1} and \eqref{ineq:scat3} it holds
\begin{equation*}
\begin{split}
|\dot{D}(t_0)|&\geq |\dot{D}(\overline{t}_{i-1})|-\int_{\overline{t}_{i-1}}^{\overline{t}_{i}}|\ddot{D}(s)|\,ds\\
&\geq R_i-(2Q_i^{7/4}+8K Q_i^{4/3})\Delta T\\
&\geq R_i-\left(\frac{2+8K}{K_2}\right) Q_i^{3/4},
\end{split}
\end{equation*}
hence
\begin{equation}
 \label{ineq:D}
|\dot{D}(t_0)|\geq \frac{R_i}{2}.
\end{equation}

We remark that the parameter $\delta_i$ has precisely been chosen so as to ensure the above stability 
property, and that we have used the fact that $K_2$ is sufficiently large with respect to $K$.

We have
\begin{equation*}
|D(t)|\geq |D(t_0)+\dot{D}(t_0)(t-t_0)|-|\rho(t)|.
\end{equation*}
Since $D(t_0)$ and $\dot{D}(t_0)$ are orthogonal,  
\begin{equation}
\label{ineq:orth}
 |D(t_0)+\dot{D}(t_0)(t-t_0)|^2\geq  |D(t_0)|^2+|\dot{D}(t_0)|^2(t-t_0)^2\geq |\dot{D}(t_0)|^2(t-t_0)^2.
\end{equation}
Hence we have by \eqref{ineq:reste} and \eqref{ineq:D}
\begin{equation*}
\begin{split}
|D(t_0)+\dot{D}(t_0)(t-t_0)|&\geq |\dot{D}(t_0)|(t-t_0)\geq \frac{1}{2} R_i (t-t_0)\geq 4|\rho(t)|,
\end{split}
\end{equation*}
so that
\begin{equation*}
\label{ineq:D3}
 |D(t)|^2\geq \frac{9}{16}|\dot{D}(t_0)|^2(t-t_0)^2\geq \frac{9}{64} R_i^2(t-t_0)^2.
\end{equation*}
Finally,
\begin{equation*}
\begin{split}
 \int_{t_0}^{\overline{t}_i}dt\;\frac{\chi(|D(t)|>l_i)}{|D(t)|^2}&
\leq \int_{t_0}^{\infty}dt\;\frac{\chi(|D(t)|>l_i)}{|D(t)|^2}\\
&\leq \int_{t_0\leq t\leq t_0+ \frac{8l_i}{3R_i}}\frac{dt}{l_i^2}
+\int_{t\geq t_0+ \frac{8l_i}{3R_i}}\frac{64}{9R_i^2}\frac{dt}{(t-t_0)^2}\\
&\leq \frac{C_4}{l_i R_i}.
\end{split}
\end{equation*}

For the integral on the time interval $(\overline{t}_{i-1},t_0)$ we use the time reversal.

When $t_0=\overline{t}_{i-1}$ we use the same method, observing that $D(t_0)\cdot \dot{D}(t_0)\geq 0$ so that
\eqref{ineq:orth} still holds. Finally if $t_0=\overline{t}_i$ we use the time reversal.
\end{proof}

Next we need to control the charge-plasma scattering. Basically our aim is to show that the time spent by a trajectory
in the protection sphere $B(\xi(t),\delta_i)$ is very small. For proving this we apply the virial theorem argument,
introducing 
\begin{equation}
 \label{def:I}
I(t)=\frac{1}{2} |Y(t)-\xi(t)|^2.
\end{equation}
Differentiating we get
\begin{equation}
\label{ineq:I1}
 \dot{I}=(Y-\xi)\cdot (W-\eta)
\end{equation}
and
\begin{equation}
\label{ineq:I2}
 \ddot{I}=|W-\eta|^2+\frac{1}{|Y-\xi|}+(Y-\xi)\cdot \left(E(Y)-E(\xi)\right).
\end{equation}

\begin{lemma}
 \label{lemma:scatt-plasma-charge}
For $(y,w)\in \supp(f(t_{i-1}))$, suppose that
\begin{equation*}
 \sqrt{h}(y,w,t_{i-1})> R_i.
\end{equation*}
Consider $(Y,W)(t)=(Y,W)(y,w,t_{i-1},t)$. Then the set
\begin{equation*}
 J=\{t\in (t_{i-1},t_i)\sep\quad  |Y(t)-\xi(t)|<\delta_i\}
\end{equation*}
is connected. Moreover,
\begin{equation}
\label{est:m1}
 \mathrm{meas}(J)\leq C_5 Q_i^{-13/8}.
\end{equation}
\end{lemma}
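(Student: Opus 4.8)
The plan is to study the virial quantity $I(t)=\tfrac12|Y(t)-\xi(t)|^2$ introduced in \eqref{def:I} and to show that on the protection sphere $I$ is \emph{strongly convex}, which will give both the connectedness and the measure bound. Differentiating as in \eqref{ineq:I1}--\eqref{ineq:I2} gives $\ddot I=|W-\eta|^2+\frac{1}{|Y-\xi|}+(Y-\xi)\cdot(E(Y)-E(\xi))$. First I would record a field bound: every trajectory issued from $\supp(f(t_{i-1}))$ satisfies $\sqrt h\le Q_i$ on $[t_{i-1},t_i]$, so by \eqref{ineq:H2} the velocities in the evolved support obey $|V|\le 2Q_i$ at each such time, and Proposition \ref{prop:prelim1} then yields $|E(\cdot,t)|\le 4KQ_i^{4/3}$, whence $|(Y-\xi)\cdot(E(Y)-E(\xi))|\le 8KQ_i^{4/3}\,|Y-\xi|$.

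The decisive gain comes from Lemma \ref{lemma:stabh}: the hypothesis $\sqrt h(y,w,t_{i-1})>R_i$ forces $\sqrt h(Y(t),W(t),t)\ge \tfrac12 R_i$ throughout, i.e. $h\ge \tfrac14 Q_i^{3/2}$ since $R_i=Q_i^{3/4}$. As $h=\tfrac12|W-\eta|^2+\frac{1}{|Y-\xi|}+K_1$, this gives, for $Q_i\ge K_3$ large enough to absorb $K_1$, the pointwise bound $|W-\eta|^2+\frac{1}{|Y-\xi|}\ge \tfrac18 Q_i^{3/2}$ valid for all $t\in[t_{i-1},t_i]$. Combined with the field bound, $\ddot I\ge \tfrac18 Q_i^{3/2}-8KQ_i^{4/3}|Y-\xi|$, which is strictly positive as soon as $|Y-\xi|<\tfrac{1}{64K}Q_i^{1/6}$; in particular on $J$ (where $|Y-\xi|<\delta_i=Q_i^{-7/8}$) one has $\ddot I\ge \tfrac{1}{16}Q_i^{3/2}$. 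It is this $Q_i^{3/2}$ lower bound, rather than the weaker one coming from the repulsive term $1/|Y-\xi|$ alone, that will eventually produce the exponent $13/8$.

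For connectedness I would argue by contradiction: if $J$ had two components, then $I$ would attain an interior maximum $s^*$ with $|Y(s^*)-\xi(s^*)|\ge\delta_i$ and $\ddot I(s^*)\le 0$, which by the previous step forces $|Y(s^*)-\xi(s^*)|\ge \tfrac{1}{64K}Q_i^{1/6}$. But the relative speed stays bounded, $|W-\eta|\le 2\sqrt h+|\eta|\le 3Q_i$, so travelling from distance $\delta_i$ out to distance $\sim Q_i^{1/6}$ takes time $\gtrsim Q_i^{-5/6}$, exceeding $\Delta T=1/(K_2Q_T)\le 1/(K_2Q_i)$ once $Q_i\ge K_3$; this contradiction shows $J$ is an interval. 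Finally, writing $J=(t_a,t_b)$ and letting $t^*$ be the minimizer of $I$ on $J$, the bound $\ddot I\ge m:=\tfrac1{16}Q_i^{3/2}$ together with Taylor's formula give $\tfrac12\delta_i^2\ge I(t_b)\ge \tfrac m2(t_b-t^*)^2$ and likewise on $[t_a,t^*]$, so $\mathrm{meas}(J)\le 2\sqrt{2(\tfrac12\delta_i^2)/m}=8Q_i^{-13/8}$, which is \eqref{est:m1}. The main obstacle is precisely the connectedness: positivity of $\ddot I$ only on the sublevel set $J$ does not by itself rule out the trajectory leaving and re-entering, so one must upgrade strict convexity to the much larger ball $|Y-\xi|<\tfrac1{64K}Q_i^{1/6}$ and couple it with the kinematic fact that, within a single interval of length $\Delta T$, a plasma particle simply cannot travel that far.
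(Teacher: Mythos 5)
Your proof is correct and is essentially the paper's own argument: the same virial quantity $I$ from \eqref{def:I} with $\ddot I\ge h-K_1-|Y-\xi|\,|E(Y)-E(\xi)|$, the same use of Lemma \ref{lemma:stabh} to obtain $h\ge\frac14Q_i^{3/2}$ along the trajectory, and the same double integration from the minimizer of $I$ yielding $\mathrm{meas}(J)\le 8Q_i^{-13/8}$. The only difference is the packaging of the connectedness step --- the paper first shows $I(t)\le 1$ on $[t_0,t_i)$ via $\bigl|\frac{d}{dt}\sqrt{I}\bigr|\le Q_i$, so that $\ddot I\ge\frac18R_i^2$ holds on the entire remaining interval and $\dot I$ stays nonnegative after the minimizer (hence no re-entry once escaped), whereas you establish convexity on the larger ball $|Y-\xi|<\frac{1}{64K}Q_i^{1/6}$ and exclude re-entry by an interior-maximum/travel-time contradiction; both variants combine the identical ingredients $|W-\eta|\lesssim Q_i$ and $\Delta T\sim (K_2Q)^{-1}$, merely deployed in opposite directions.
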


\begin{proof} Let $t_0\in \overline{J}$ be a minimizer for $I(t)$. By \eqref{ineq:I1},
\begin{equation*}
 |\dot{I}(t)|\leq \sqrt{2I(t)}|W(t)-\eta(t)|,
\end{equation*}
 hence
\begin{equation*}
 |\dot{\sqrt{I}(t)}|\leq Q_i.
\end{equation*}
For $t\in [t_0,t_i)$ we obtain
\begin{equation*}
\begin{split} 
\sqrt{I(t)}&\leq \sqrt{I(t_0)}+Q_i\Delta T\leq \frac{1}{\sqrt{2}}Q_i^{-7/8}+\frac{1}{K_2},
\end{split}
\end{equation*}
therefore 
\begin{equation}
 \label{ineq:I3} |I(t)|\leq 1.
\end{equation}
Moreover, by Lemma \ref{lemma:stabh} we have for $t\in [t_0,t_i)$
\begin{equation*}
\label{ineq:stab1}
 \sqrt{h}\left(Y(t),W(t),t\right)\geq  \frac{R_i}{2}.
\end{equation*}
Then by \eqref{ineq:I2} and \eqref{ineq:I3} we have for $t\in [t_0,t_i)$
 \begin{equation*}
  \begin{split}
   \ddot{I}(t)&\geq h\left(Y(t),W(t),t\right)-K_1-|Y(t)-\xi(t)||E(Y(t),t)-E(\xi(t),t)|\\
 &\geq \frac{1}{4}R_i^2-K_1-8K\sqrt{2}Q_i^{4/3}\\
&= \frac{1}{4}Q_i^{3/2}-K_1-8K\sqrt{2} Q_i^{4/3}.
  \end{split}
 \end{equation*}
Now, we observe that for $K_3>0$ sufficiently large (depending only on $K_1$ and $K$) we have by 
assumption \eqref{ineq:assumption}
\begin{equation}
 \label{ineq:I4}
\ddot{I}(t)\geq \frac{1}{8}R_i^2.
\end{equation}

Consider now $(t_-,t_+)\subset J$ a maximal connected component containing $t_0$. If $t_0\in [t_-,t_+)$, $\dot{I}(t_0)\geq 0$
(if $t_0=t_+$ we use the same argument via the time reversal). Then
\begin{equation*}
 \dot{I}(t)\geq \dot{I}(t_0)+\frac{1}{8}R_i^2(t-t_0)\geq 0,\quad \forall t\in [t_0,t_i).
\end{equation*}
Since $I$ is increasing from $t_0$ up to $t_i$, the trajectories cannot reenter in the protection sphere once escaped. 
Therefore $J=(t_-,t_+)$ is connected.

Next, integrating twice \eqref{ineq:I4} in time and using that $\dot{I}(t_0)\geq 0$ we get
\begin{equation*}
 \frac{1}{2}\delta_i^2\geq I(t)\geq I(t_0)+\frac{1}{16}R_i^2(t-t_0)^2,\quad \forall t\in J,
\end{equation*}
so that
\begin{equation}
\label{ineq:m1}
 (t-t_0)^2\leq 8 Q_i^{-13/4},\quad \forall t\in J,
\end{equation}
and \eqref{est:m1} is proved.

\end{proof}

We finally obtain the following variant of Lemma \ref{lemma:scatt-plasma-charge}
\begin{lemma}
 \label{lemma:scatt-plasma-charge-bis}
For $(y,w)\in \supp(f(t_{i-1}))$, suppose that
\begin{equation*}
 \sqrt{h}(y,w,t_{i-1})> \frac{Q_i}{2}.
\end{equation*}
Consider $(Y,W)(t)=(Y,W)(y,w,t_{i-1},t)$. Then the set
\begin{equation*}
 J=\{t\in (t_{i-1},t_i)|\:|Y(t)-\xi(t)|<2\delta_i\}
\end{equation*}
is connected.
Moreover,
\begin{equation}
\label{est:m2-bis}
 \mathrm{meas}(J)\leq C_6 Q_i^{-15/8}.
\end{equation}
\end{lemma}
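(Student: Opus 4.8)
The plan is to mimic almost verbatim the proof of Lemma \ref{lemma:scatt-plasma-charge}, the only --- but decisive --- difference being that the stronger hypothesis $\sqrt{h}(y,w,t_{i-1})>Q_i/2$ upgrades the convexity of $I(t)=\frac12|Y(t)-\xi(t)|^2$ from order $R_i^2=Q_i^{3/2}$ to order $Q_i^2$. First I would again pick a minimizer $t_0\in\overline J$ of $I$ and use \eqref{ineq:I1} together with $|W-\eta|\le\sqrt2\,\sqrt h\le\sqrt2\,Q_i$ to get $|\frac{d}{dt}\sqrt I|\le Q_i$; integrating from $t_0$ and using $\sqrt{I(t_0)}\le\sqrt2\,\delta_i$ (the minimizer now sits in the ball of radius $2\delta_i$) and $Q_i\Delta T\le 1/K_2$ shows that $I(t)$ stays bounded, hence $|Y(t)-\xi(t)|\le C$, for $t\in[t_0,t_i)$. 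This confines the trajectory exactly as in the previous lemma, only with a larger numerical constant due to $2\delta_i$ replacing $\delta_i$.

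The heart of the argument is the improved energy floor along the trajectory. Since the hypothesis now controls $\sqrt h$ at order $Q_i$ rather than $R_i$, I would reproduce the estimate behind Lemma \ref{lemma:stabh}: by \eqref{ineq:h1} the variation of $\sqrt h$ over $(t_{i-1},t_i)$ is at most $8KQ_i^{4/3}\Delta T\le\frac{8K}{K_2}Q_i^{1/3}$, negligible compared with $Q_i$. Hence $\sqrt h(Y(t),W(t),t)\ge Q_i/2-\frac{8K}{K_2}Q_i^{1/3}\ge Q_i/4$ on $[t_0,t_i)$, using \eqref{ineq:assumption}. Feeding this into \eqref{ineq:I2} as in Lemma \ref{lemma:scatt-plasma-charge} --- namely $\ddot I\ge (h-K_1)-|Y-\xi|\,|E(Y)-E(\xi)|$, with the field difference bounded by $CQ_i^{4/3}$ via Proposition \ref{prop:prelim1} and \eqref{ineq:H2}, and $|Y-\xi|\le C$ --- gives $\ddot I\ge \frac{1}{16}Q_i^2-K_1-CQ_i^{4/3}$. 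Since $Q_i^2$ now dominates the subcritical terms, assumption \eqref{ineq:assumption} with $K_3$ large yields $\ddot I\ge C Q_i^2$.

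The conclusion is then purely mechanical. As before, strict convexity of $I$ keeps $\dot I$ nonnegative past $t_0$, so the trajectory cannot re-enter the ball once it has left, whence $J$ is connected. Double integration of $\ddot I\ge CQ_i^2$ from $t_0$, against the confinement $I(t)<\frac12(2\delta_i)^2=2Q_i^{-7/4}$, gives $(t-t_0)^2\le C Q_i^{-7/4-2}=CQ_i^{-15/4}$, hence $\mathrm{meas}(J)\le C_6 Q_i^{-15/8}$, which is the claimed bound. I do not expect any genuine obstacle: the whole proof transcribes Lemma \ref{lemma:scatt-plasma-charge}, the exponent $-13/8$ becoming $-15/8$ precisely because the energy floor improves from $Q_i^{3/4}$ to $Q_i$ and so the convexity rate improves from $Q_i^{3/2}$ to $Q_i^2$. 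The only point deserving care is checking that $K_3$ in \eqref{ineq:assumption} can be fixed so that the term of order $Q_i^2$ absorbs both $K_1$ and the field contribution of order $Q_i^{4/3}$; this is immediate since $2>4/3$ and $K_1,K,K_2$ are fixed conserved quantities.
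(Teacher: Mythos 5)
Your proposal is correct and coincides with the paper's own argument: the published proof of Lemma \ref{lemma:scatt-plasma-charge-bis} simply says to follow Lemma \ref{lemma:scatt-plasma-charge} step by step, noting that the hypothesis $h(y,w,t_{i-1})\geq Q_i^2/4$ upgrades the convexity to $\ddot{I}(t)\geq Q_i^2/8$, which is precisely your improved energy floor. Your double integration against the confinement $I<2\delta_i^2$ then reproduces the exponent $-15/8$ exactly as in the paper, with only immaterial differences in the numerical constants.
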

\begin{proof} 
 It suffices to follow the proof of Lemma \ref{lemma:scatt-plasma-charge} step by step, observing that 
estimate \eqref{ineq:m1} can even be improved if $h(y,w,t_{i-1})\geq Q_i^2/4$. Indeed, we have in this case 
$\ddot{I}(t)\geq Q_i^2/8$ for $t\in (t_{i-1},t_i)$ and everything goes exactly as before leading to \eqref{est:m2-bis}.
\end{proof}

This concludes the dynamical preparation. We now come to the proof of Proposition \ref{prop:mainQi}.

\subsection{Proof of Proposition \ref{prop:mainQi}}

In view of \eqref{ineq:h1}, in order to show Proposition \ref{prop:mainQi} we need
to control the integrals
\begin{equation*}
\mathcal{J}_1=\int_{t_{i-1}}^{t_i}dt\, |E\left(\xi(t),t\right)|\quad 
\text{and}\quad \mathcal{J}_2=\int_{t_{i-1}}^{t_i}dt\,|E\left(X(t),t\right)|.
\end{equation*}

\begin{lemma}
 \label{lemma:est1}We have
\begin{equation*}
 \mathcal{J}_1\leq C_7 Q \Delta T.
\end{equation*}
\end{lemma}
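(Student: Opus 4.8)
The plan is to estimate the field at the charge through its kinetic representation and then split the plasma particles according to their energy at time $t_{i-1}$, treating the slow bulk by a static bound and the fast or close minority by the charge--plasma scattering bound of Lemma~\ref{lemma:fac}. First I would write, pointwise in $t$,
\[
|E(\xi(t),t)|\leq \int \frac{\rho(x',t)}{|\xi(t)-x'|^2}\,dx'
=\int \frac{f(x',w',t)}{|\xi(t)-x'|^2}\,dx'\,dw',
\]
and use the transport identity \eqref{eq:transport-one} together with the measure-preserving character of the flow to rewrite this, via the change of variables $(x',w')=(Y(t),W(t))$ with $(Y,W)(t)=(Y,W)(y,w,t_{i-1},t)$, as an integral of $f(y,w,t_{i-1})|\xi(t)-Y(t)|^{-2}$ over the initial phase space. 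Integrating in $t$ and applying Fubini reduces $\mathcal{J}_1$ to controlling $\int f(y,w,t_{i-1})\big[\int_{t_{i-1}}^{t_i}|\xi(t)-Y(t)|^{-2}\,dt\big]\,dy\,dw$.

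The crucial point is that neither crude bound suffices alone: Proposition~\ref{prop:prelim1} with the maximal velocity $\mathcal{O}(Q_i)$ only gives $\mathcal{O}(Q_i^{4/3}\Delta T)$, while Lemma~\ref{lemma:fac} applied to every trajectory against the unit mass only gives $\mathcal{O}(Q_i)$; both exceed the target $\mathcal{O}(Q\,\Delta T)=\mathcal{O}(K_2^{-1})$. I would therefore split the phase space at time $t_{i-1}$ into the low-energy set $\mathcal{L}=\{\sqrt{h}\leq R_i\}$ and its complement $\mathcal{H}=\{\sqrt{h}> R_i\}$, with $R_i=Q_i^{3/4}$ as in \eqref{def:1}.

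On $\mathcal{L}$, the stability estimate \eqref{ineq:stabh2} keeps $\sqrt{h}(Y(t),W(t),t)\leq \tfrac32 R_i$ along the whole subinterval, so $|W(t)|\leq 3R_i$ by \eqref{ineq:H2}. Undoing the change of variables, the contribution of $\mathcal{L}$ at each time $t$ is bounded by $\int \rho_{3R_i}(x',t)|\xi(t)-x'|^{-2}\,dx'$, to which Proposition~\ref{prop:prelim1} applies with $R=3R_i$, giving the static bound $\mathcal{O}(R_i^{4/3})=\mathcal{O}(Q_i)$ pointwise and hence $\mathcal{O}(Q_i\,\Delta T)\leq\mathcal{O}(Q\,\Delta T)$ after time integration. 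On $\mathcal{H}$, I would keep instead the per-trajectory bound $\int_{t_{i-1}}^{t_i}|\xi(t)-Y(t)|^{-2}\,dt\leq (2\sqrt2+1)Q_i$ from Lemma~\ref{lemma:fac}, and gain from the smallness of the mass of $\mathcal{H}$: splitting $\{\sqrt h>R_i\}$ into a large-velocity part and a close-to-$\xi$ part, a Chebyshev bound using the kinetic energy and \eqref{ineq:eta} controls the first by $\mathcal{O}(H R_i^{-2})$, and a H\"older bound using the $L^{5/3}$ estimate \eqref{ineq:rho} controls the second, yielding $\int_{\mathcal H} f\,dy\,dw\leq C R_i^{-2}=C Q_i^{-3/2}$. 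The $\mathcal{H}$-contribution is then $\lesssim Q_i\cdot Q_i^{-3/2}=Q_i^{-1/2}\leq 1$, which is again $\mathcal{O}(Q\,\Delta T)$ since $Q\,\Delta T=K_2^{-1}$ is a fixed constant and $Q_i\geq 1$ by \eqref{ineq:assumption}. Summing the two contributions gives $\mathcal{J}_1\leq C_7 Q\,\Delta T$.

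The step I expect to be the main obstacle is the balance of this splitting: one must check that the velocity cutoff $\sim R_i=Q_i^{3/4}$ is precisely the threshold at which Proposition~\ref{prop:prelim1} yields a bound linear in $Q_i$ (so the slow part costs only $\mathcal{O}(Q_i\,\Delta T)$), while being large enough that the complementary mass $\int_{\mathcal H} f$ decays like $R_i^{-2}=Q_i^{-3/2}$ fast enough to absorb the factor $Q_i$ produced by Lemma~\ref{lemma:fac}. Verifying that the constant $K_1$ is negligible against $R_i^2$ and that $Q_i\geq 1$ (both ensured by \eqref{ineq:assumption}) is what makes the energy splitting and the constant bookkeeping go through.
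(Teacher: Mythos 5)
Your proposal is correct and follows essentially the same route as the paper: the same splitting of phase space at $\sqrt{h}(y,w,t_{i-1})=R_i$, the static bound on the low-energy bulk via Lemma~\ref{lemma:stabh}, \eqref{ineq:H2}, Liouville's theorem and Proposition~\ref{prop:prelim1} with $R=3R_i$, and Lemma~\ref{lemma:fac} combined with the $\mathcal{O}(Q_i^{-3/2})$ mass of the high-energy set, with $Q_i\geq K_3$ absorbing the resulting $Q_i^{-1/2}$ into the fixed constant $Q\Delta T=K_2^{-1}$. The only cosmetic difference is in the mass estimate for $\{\sqrt{h}>R_i\}$: the paper applies Chebyshev once with the full pointwise energy, using $\int h\,f\,dy\,dw\leq C(H+1)$ from energy conservation, rather than your two-piece kinetic/proximity split via \eqref{ineq:eta} and \eqref{ineq:rho}, which yields the same bound.
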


\begin{proof}
Setting $(Y,W)(t)=(Y,W)(y,w,t_{i-1},t)$ we have
\begin{equation}
 \label{ineq:J1}
 \begin{split}
\mathcal{J}_1&\leq \int_{t_{i-1}}^{t_i}\,dt  
\int_{(y,w)| \sqrt{h}(y,w,t_{i-1})\leq R_i}\,dy\,dw f(y,w,t_{i-1})\frac{1}{|Y(t)-\xi(t)|^2}\\
&+\int_{(y,w)| \sqrt{h}(y,w,t_{i-1})\geq R_i}\,dy\,dwf(y,w,t_{i-1})\int_{t_{i-1}}^{t_i}dt\,\frac{1}{|Y(t)-\xi(t)|^2}.
\end{split}
\end{equation}

Note that, by virtue of the stability property for $\sqrt{h}$ (see Lemma \ref{lemma:stabh}) and by \eqref{ineq:H2}
\begin{equation*}
 |W(t)|\leq 3R_i,\quad \forall t\in (t_{i-1},t_i)
\end{equation*}
when $\sqrt{h}(y,w,t_{i-1})\leq R_i$.
Therefore, thanks to Liouville's theorem the first term in the right-hand side of \eqref{ineq:J1} can be controlled by
\begin{equation*}
 \int_{t_{i-1}}^{t_i}\,dt
\int_{|\overline{w}|\leq 3 R_i}d\overline{y}
\,d\overline{w}\,f(\overline{y},\overline{w},t)\frac{1}{|\overline{y}-\xi(t)|^2}.
\end{equation*}
By virtue of Proposition \ref{prop:prelim1} with $R$ replaced by $3R_i$ this latter is bounded by
\begin{equation}
 \label{ineq:est2} 
C_8 R_i^{4/3}\Delta T=C_8Q_i \Delta T.
\end{equation}

We now turn to the second integral in \eqref{ineq:J1}. By virtue of Lemma \ref{lemma:fac} we can bound it by
\begin{equation*}
 \begin{split}
5Q_i\int_{(y,w)|\:h(y,w,t_{i-1})> R_i^2} &dy\,dw\,f(y,w,t_{i-1})\\
&\leq 5 Q_iQ_i^{-3/2}\int dy\,dw\,h(y,w,t_{i-1})f(y,w,t_{i-1})\\
&\leq C_{9} Q_i^{-1/2}(H+1)\\
&\leq C_{10} Q\Delta T,
\end{split}
\end{equation*}
where we have used that $Q_i\geq K_3$ in the last inequality.
The conclusion follows.
\end{proof}

\begin{lemma}
 \label{lemma:est2}Let $(x,v)\in \supp(f(t_{i-1}))$ and 
$(X,V)(t)=(X,V)(x,v,t_{i-1},t)$ be a trajectory such that $h(x,v,t_{i-1})>Q_i^2/4$. Then
\begin{equation*}
 \mathcal{J}_2\leq C_{11} Q \Delta T.
\end{equation*}
\end{lemma}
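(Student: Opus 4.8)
The plan is to mimic the proof of Lemma \ref{lemma:est1}, replacing the fixed charge trajectory $\xi$ by the high-energy plasma trajectory $X$, the essential new input being the plasma-plasma scattering estimate of Lemma \ref{lemma:scat-plasma-plasma} in place of Lemma \ref{lemma:fac}. First I would bound the field kinetically and use Liouville's theorem to write
\begin{equation*}
\mathcal{J}_2\leq \int_{t_{i-1}}^{t_i}dt\int dy\,dw\,f(y,w,t_{i-1})\frac{1}{|X(t)-Y(t)|^2},
\end{equation*}
with $(Y,W)(t)=(Y,W)(y,w,t_{i-1},t)$. Throughout I would exploit that every characteristic issued from $\supp(f(t_{i-1}))$ has $\sqrt{h}\leq Q_i$ on $(t_{i-1},t_i)$, so that by \eqref{ineq:H2} all plasma velocities are $\leq 2Q_i$ there; hence $P(t)\leq 2Q_i$ and $\|\rho(t)\|_{L^\infty}\leq CQ_i^3$.

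I would first peel off the times at which $X$ itself is close to the charge. Since $h(x,v,t_{i-1})>Q_i^2/4$, Lemma \ref{lemma:scatt-plasma-charge-bis} applies to $X$: the set $\{t\sep|X(t)-\xi(t)|<2\delta_i\}$ has measure $\leq C_6Q_i^{-15/8}$, and on it $|E(X(t),t)|\leq KP(t)^{4/3}\leq CQ_i^{4/3}$ by Proposition \ref{prop:prelim1}; the product is $\leq CQ_i^{-13/24}\leq CQ\Delta T$, because $Q\Delta T=1/K_2$ and $Q_i\geq 1$. It then remains to estimate the integral over the times where $|X(t)-\xi(t)|\geq 2\delta_i$, and I would split the $(y,w)$-integration as in Lemma \ref{lemma:est1}.

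Low-energy particles, $\sqrt{h}(y,w,t_{i-1})\leq R_i$, are treated exactly as the first term of Lemma \ref{lemma:est1}: by the stability Lemma \ref{lemma:stabh} one has $|W(t)|\leq 3R_i$, so Liouville's theorem and Proposition \ref{prop:prelim1} (with $R=3R_i$, centered now at $X(t)$) give $CR_i^{4/3}\Delta T=CQ_i\Delta T$. For the high-energy particles, $\sqrt{h}(y,w,t_{i-1})>R_i$, whose $f$-mass is $\leq C(H+1)R_i^{-2}=CQ_i^{-3/2}$ by Chebyshev, I would distinguish three regimes. (i) When $Y$ is close to $\xi$, Lemma \ref{lemma:scatt-plasma-charge} bounds the (connected) time by $C_5Q_i^{-13/8}$, while $|X-Y|\geq\delta_i$ forces $|X-Y|^{-2}\leq Q_i^{7/4}$; integrating and using the small mass yields $\leq CQ_i^{-11/8}$. (ii) The very close encounters $|X(t)-Y(t)|\leq l_i=Q_i^{-2}$ are handled statically at fixed time, since $\int_{|X-\bar y|\leq l_i}\rho(\bar y,t)|X-\bar y|^{-2}\,d\bar y\leq C\|\rho(t)\|_{L^\infty}l_i\leq CQ_i^3 Q_i^{-2}=CQ_i$, hence $\leq CQ_i\Delta T$ after integration in $t$. (iii) The genuine plasma-plasma interaction, with both $X$ and $Y$ at distance $\geq\delta_i$ from $\xi$ and $|X-Y|>l_i$, I would split according to the relative velocity at the left endpoint of each maximal ``both-far'' time interval: if it exceeds $R_i$, Lemma \ref{lemma:scat-plasma-plasma} gives $C_3/(l_iR_i)=CQ_i^{5/4}$, which combined with the small mass is $\leq CQ_i^{-1/4}$; if it is $\leq R_i$, the relative velocity stays $\leq 2R_i$ by the acceleration bound $CQ_i^{7/4}\Delta T\leq R_i$, so $W(t)$ lies in $B(V(t),2R_i)$ and, by Liouville, the velocity-truncated density $\tilde\rho$ satisfies $\|\tilde\rho\|_{L^\infty}\leq CR_i^3$ and $\|\tilde\rho\|_{L^{5/3}}\leq K_1$, whence the argument of \eqref{ineq:rho2} yields $\int\tilde\rho\,|X-\bar y|^{-2}\,d\bar y\leq CR_i^{4/3}=CQ_i$ and the bound $CQ_i\Delta T$.

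Collecting the contributions, each is $\leq CQ_i\Delta T\leq CQ\Delta T$ (using $Q_i\leq Q$), which gives the claim for a suitable $C_{11}$. The main difficulty I anticipate is the bookkeeping in regime (iii): one must ensure that the ``both far from $\xi$'' times form only finitely many intervals---guaranteed by the connectedness statements in Lemmas \ref{lemma:scatt-plasma-charge} and \ref{lemma:scatt-plasma-charge-bis}---and that on each of them the relative-velocity dichotomy is stable, so that Lemma \ref{lemma:scat-plasma-plasma} is applied with its correct endpoint hypothesis. The accompanying subtlety is to check, as above, that the velocity-ball density estimate keeps the gain $R_i^{4/3}=Q_i$ rather than the naive $Q_i^{4/3}$, which is precisely what renders every term linear in $Q$.
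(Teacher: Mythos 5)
Your proposal is correct and follows essentially the same route as the paper's own proof: the same peeling-off of the near-charge times for $X$ via Lemma \ref{lemma:scatt-plasma-charge-bis}, the same four-way splitting of phase space (low energy; $|X-Y|\leq l_i=Q_i^{-2}$; $Y$ near $\xi$; both far) with the at-most-two-interval decomposition from the connectedness in Lemma \ref{lemma:scatt-plasma-charge}, and the same endpoint relative-velocity dichotomy handled by Lemma \ref{lemma:scat-plasma-plasma} versus the shifted velocity-ball static estimate. All exponents match the paper's ($Q_i^{-13/24}$, $Q_i^{-11/8}$, $Q_i^{3}l_i=Q_i$, $Q_i^{-1/4}$, $R_i^{4/3}=Q_i$), so there is nothing to add.
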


\begin{proof}
 Let $(t_-,t_+)$ be the connected interval (if any) in which we have $|X(t)-\xi(t)|<2\delta_i$. By virtue of Proposition \ref{prop:prelim1} 
and Lemma \ref{lemma:scatt-plasma-charge-bis} (estimate \eqref{est:m2-bis})
\begin{equation*}
 \int_{t_-}^{t_+}dt\,|E\left(X(t),t\right)|\leq 4 K Q_i^{4/3} C_6 Q_i^{-15/8}\leq C_{12} Q \Delta T. 
\end{equation*}

It remains to control the integrals
\begin{equation*}
 \int_{t_{i-1}}^{t_-}dt\,|E\left(X(t),t\right)|\quad \text{and}\quad  
\int_{t_+}^{t_i}dt\,|E\left(X(t),t\right)|,
\end{equation*}
which can be handled (using again the time reversal) in the same way. We write
\begin{equation*}
 \begin{split}
  \int_{t_{i-1}}^{t_-}&dt \int dy\,dw\, f(y,w,t_{i-1})\frac{1}{|X(t)-Y(t)|^2}\\
&=\sum_{j=1}^4 \int_{S_j} dt\,dy\,dw\, f(y,w,t_{i-1})\frac{1}{|X(t)-Y(t)|^2}\\
&=\sum_{j=1}^4 \tilde{\mathcal{J}}_j,
 \end{split}
\end{equation*}
where
\begin{equation*}
 \begin{split}
  S_1&=\{(y,w,t)|\quad \sqrt{h}(y,w,t_{i-1})\leq R_i\},\\
S_2&=\{(y,w,t)|\quad \sqrt{h}(y,w,t_{i-1})>R_i\quad \text{and}\quad |X(t)-Y(t)|\leq l_i\},\\
S_3&=\{(y,w,t)|\quad \sqrt{h}(y,w,t_{i-1})>R_i, \quad |X(t)-Y(t)|> l_i\quad \text{and}\quad 
|Y(t)-\xi(t)|\leq \delta_i\}\\
S_4&=\{(y,w,t)|\quad \sqrt{h}(y,w,t_{i-1})>R_i, \quad |X(t)-Y(t)|> l_i\quad \text{and}\quad 
|Y(t)-\xi(t)|> \delta_i\},
 \end{split}
\end{equation*}
and where $$l_i=Q_i^{-2}.$$
Using stability for $\sqrt{h}$ (see Lemma \ref{lemma:stabh}) 
as well as static estimates, the first integral $\tilde{\mathcal{J}}_1$ can be estimated as before
in Lemma \ref{lemma:est1} (see \eqref{ineq:est2}), so that
\begin{equation*}
 \tilde{\mathcal{J}}_1\leq C_{13}Q_i\Delta T.
\end{equation*}

For the integral on $S_2$, following \cite{Pf}, \cite{Sh}, \cite{Wo} or \cite{G} we get
\begin{equation*}
 \tilde{\mathcal{J}}_2\leq \int_{t_{i-1}}^{t_-}dt \int_{|X(t)-\overline{y}|<l_i}
d\overline{y}\,\frac{\rho(\overline{y},t)}{|\overline{y}-X(t)|^2}\leq C_{14}Q_i^3l_i\Delta T
\leq C_{14}Q_i\Delta T .
\end{equation*}

Next, for the integral $\tilde{\mathcal{J}}_3$ we have
\begin{equation*}
 \tilde{\mathcal{J}}_3\leq \int_{h(y,w,t_{i-1})>R_i^2} dy\,dw\, f(y,w,t_{i-1})
\int_{t_{i-1}}^{t_-}dt\,
\frac{\chi(|Y(t)-\xi(t)|\leq\delta_i)}{|Y(t)-X(t)|^2}.
\end{equation*}
Since $|X(t)-\xi(t)|>2\delta_i$ in $(t_{i-1},t_-)$ we have in $S_3$
\begin{equation*}
 |Y(t)-X(t)|\geq |X(t)-\xi(t)|-|Y(t)-\xi(t)|\geq \delta_i.
\end{equation*}
Hence, in view of the conservation of the energy and of 
Lemma \ref{lemma:scatt-plasma-charge} (estimate \eqref{est:m1}) we have
\begin{equation*}
\begin{split}
 \tilde{\mathcal{J}}_3&
\leq \frac{1}{\delta_i^2}\int_{h(y,w,t_{i-1})>R_i^2} dy\,dw\, f(y,w,t_{i-1})\int_{t_{i-1}}^{t_-}dt\,\chi(|Y(t)-\xi(t)|\leq \delta_i)\\
&\leq C_{15}Q_i^{7/4} Q_i^{-3/2}Q_i^{-13/8}\\
&\leq C_{16}Q\Delta T.
\end{split}
\end{equation*}

We finally estimate the last integral $\tilde{\mathcal{J}}_4$. By virtue of Lemma \ref{lemma:scatt-plasma-charge}, 
for each $(y,w)$ such that $\sqrt{h}(y,w,t_{i-1})> R_i$ we may split the set 
$$\{t\in (t_{i-1},t_-)|\:|Y(t)-\xi(t)|\geq \delta_i\}$$ into
 two at most intervals $J^1(y,w)$ and $J^2(y,w)$ for which
\begin{equation*}
 \inf_{t\in J^k(y,w)}|Y(t)-\xi(t)|\geq \delta_i,\quad k=1,2.
\end{equation*}
Note that at least one of the extremal points of $J^1$ or $J^2$ has to coincide with at least one of the $t_{i-1}$ or $t_i$.
Hence we have  
\begin{equation*}
\tilde{\mathcal{J}}_4\leq \sum_{k=1}^2 \int_{h(y,w,t_{i-1})>R_i^2} dy\,dw\, f(y,w,t_{i-1})
\int_{J^k(y,w)}dt\,
\frac{\chi(|X(t)-Y(t)|>l_i)}{|Y(t)-X(t)|^2}.
\end{equation*}
It suffices to control the integral on $J^1(y,w)$ because the integral on $J^2(y,w)$ can be handled in the same way.

We set $J^1(y,w)=(\overline{t}_-,\overline{t}_+)$. Then we
further split the integration domain as follows
\begin{equation*}
 \begin{split}
\{(y,w)\sep \quad \sqrt{h}(y,w,t_{i-1})> R_i\}=S_4^{(1)}\cup S_4^{(2)},
\end{split}
\end{equation*}
where
\begin{equation*}
\begin{split}
S_4^{(1)}&= \{(y,w)|\quad \sqrt{h}(y,w,t_{i-1})> R_i\quad \text{and}\quad  |W(\overline{t}_{-})-V(\overline{t}_{-})|\leq R_i\},\\
S_4^{(2)}&= \{(y,w)|\quad \sqrt{h}(y,w,t_{i-1})> R_i\quad\text{and}\quad  |W(\overline{t}_{-})-V(\overline{t}_{-})|> R_i\}.
\end{split}
\end{equation*}
We recall that $W(t)$ denotes the velocity of the plasma particle leaving $(y,w)$ at time $t_{i-1}$.

First, for $(y,w)\in S_4^{(1)}$ we have 
\begin{equation*}
 |W(t)-V(t)|\leq\frac{3}{2} R_i,\quad \forall t\in J_1(y,w).
\end{equation*}
Indeed, both $X(t)$ and $Y(t)$ remain at least at distance $\delta_i$ of the charge
in $J_1(y,w)$, therefore
\begin{equation*}
\left| \frac{d}{dt}|W(t)-V(t)|\right|\leq 2 Q_i^{7/4}+8 K Q_i^{4/3},
\end{equation*}
so our choice of $\Delta T$ ensures stability for the velocities (see the proof of Lemma \ref{lemma:scat-plasma-plasma}).
Hence, applying Liouville's theorem we obtain
\begin{equation*}
\begin{split}
 \int_{S_4^{(1)}} dy\,dw\, f(&y,w,t_{i-1})
\int_{J^1(y,w)}dt\,
\frac{\chi(|X(t)-Y(t)|>l_i)}{|Y(t)-X(t)|^2}
\\
&\leq \int dy\,dw\, f(y,w,t_{i-1})
 \int_{t_{i-1}}^{t_i}dt\;
\frac{\chi(|V(t)-W(t)|<3R_i/2)}{|Y(t)-X(t)|^2}\\
&\leq 
 \int_{t_{i-1}}^{t_i}dt\;\int_{|\overline{w}-V(t)|\leq 3R_i/2} d\overline{y}\,d\overline{w}\, f(\overline{y},\overline{w},t)
\frac{1}{|\overline{y}-X(t)|^2}\\
&\leq C_{17}Q_i\Delta T,
\end{split}
\end{equation*}
where we have used Proposition \ref{prop:prelim1} in the last inequality.

Finally, let $(y,w)\in S_4^{(2)}$. Since $|X(t)-\xi(t)|\geq 2\delta _i >\delta_i$ on $J^1(y,w)$, we may now apply Lemma \ref{lemma:scat-plasma-plasma} to obtain  
\begin{equation*}
\begin{split}
 \int_{ J^1(y,w)}dt\,\frac{\chi(|X(t)-Y(t)|>l_i)}{|X(t)-Y(t)|^2}
&\leq \frac{C_{18}}{R_il_i}.
\end{split}
\end{equation*}
Since $l_i=Q_i^{-2}$ this yields
\begin{equation*}
\begin{split}
\int_{S_4^{(2)}} dy\,dw\, f(y,w,t_{i-1})
&\int_{J^1(y,w)}dt\,
\frac{\chi(|X(t)-Y(t)|>l_i)}{|Y(t)-X(t)|^2} \\&\leq \frac{C_{19}}{R_i^2}\cdot \frac{1}{R_i l_i}\leq 
C_{19}Q_i^{-9/4}Q_i^2\leq C_{20} Q\Delta T.
\end{split}
\end{equation*}
This concludes the proof of the Lemma.
\end{proof}

\bigskip

Combining Lemmas \ref{lemma:est1} and \ref{lemma:est2} we 
may finally turn to the

\noindent \textbf{Proof of Proposition \ref{prop:mainQi} completed}.

\medskip

Let $(X,V)(t)=\left(X,V\right)(x,v,t_{i-1},t)$ be a trajectory such that
$$\sqrt{h}\left(X(\overline{t}),V(\overline{t}),\overline{t}\right)=Q_i\quad \text{for some}\quad \overline{t}\in[t_{i-1},t_i].$$ By virtue of \eqref{ineq:che} we have
\begin{equation*}
 \sqrt{h}(x,v,t_{i-1})> \frac{ Q_i}{2}.
\end{equation*}

On the other hand, integrating first \eqref{ineq:h1} on $[t_{i-1},\overline{t}]$ and applying then Lemmas \ref{lemma:est1}
and \ref{lemma:est2} to the high-energy trajectory $(X,V)$ we obtain
\begin{equation*}
\begin{split}
 \sqrt{h}\left(X(\overline{t}),V(\overline{t}),\overline{t}\right)
&\leq \sqrt{h}(x,v,t_{i-1})+\int_{t_{i-1}}^{t_i}dt\,\left(|E\left(X(t),t\right)|+|E\left(\xi(t),t\right)|\right)\\
&\leq Q_{i-1}+C_{20}Q \Delta T,
\end{split}
\end{equation*}
whence
\begin{equation*}
 Q_i\leq Q_{i-1}+C_{20}Q\Delta T
\end{equation*}
and the conclusion follows.

\subsection{Proof of Theorem \ref{thm:main1} completed}
\label{subsection:completed}

We finally complete the proof of Theorem \ref{thm:main1} in the following way. We first regularize the 
Coulomb kernel $1/|x|$
at a small distance from the origin, say $\eps$, obtaining a solution 
$(\xi^\eps(t),\eta^\eps(t),f^\eps(t), X^\eps(t), V^\eps(t))$ of the corresponding regularized version of Problem 
\eqref{assumption:classe-1}-\eqref{eq:syst2}. The fact that the corresponding global energy $H^\eps$ is 
uniformly bounded by $H$ provides uniform bounds for the kinetic energy \eqref{eq:kinetic} of $f^\eps$ and for $|\eta^\eps|$ (see \eqref{ineq:eta}).

We choose $\eps$ smaller than $1/C(T)$, where $C(T)$ is the a priori bound appearing in 
\eqref{ineq:H1}. Next, we take $T_\eps$ maximal such that
\begin{equation*}
 \min\left\{|X^\eps(x,v,0,t)-\eta^\eps(t)|\:\sep\:(x,v)\in \supp(f_0)\right\}>\eps,\quad t\in[0,T_\eps).
\end{equation*}
On $[0,T_\eps)$, the extra field created by the charge 
coincides with the one of Problem \eqref{eq:system} for all trajectory starting from $\supp(f_0)$. Therefore the previous
analysis applies, yielding \eqref{ineq:H1}. We conclude that $T_\eps=T$, so that \eqref{ineq:H1} provides uniform $L^\infty$ bounds 
for $\rho_\eps$ and $E_\eps$ on $[0,T]$. It follows that $(\xi^\eps,\eta^\eps)$ 
is uniformly bounded and equicontinuous on $[0,T]$ and that $(X^\eps,V^\eps)$ 
is uniformly bounded and equicontinuous
on $\supp(f_0)\times[0,T]$. Hence one easily passes to the
limit $\eps\to 0$ to get existence of a global solution $(\xi,f)$ satisfying the desired conditions.

\medskip

Finally, uniqueness is achieved by means of a Gronwall inequality, using almost-Lipschitz regularity for $E$ 
(see \eqref{eq:almost-lipschitz}) and the lower bound
\eqref{assumption:lower} together with the assumption on the support of $f_0$. We omit the details.

\section{The case of many point charges}
\label{sec:many} 

The purpose of the present section is to extend the existence and uniqueness result of the previous section
to the case of many point charges. We will establish the following
\begin{thm}
 \label{thm:main-m}
Let $N\geq 1$. Let $f_0\in L^\infty$ be a compactly supported distribution and let 
$\{\xi_{\alpha0},\eta_{\alpha0}\}_{\alpha=1}^N$ such that $\xi_{\alpha0}\neq \xi_{\beta0}$
for $\alpha\neq \beta$.

 Assume that 
there exists some $\delta_0>0$ such that
$$\min\left\{
|x-\xi_{\alpha0}|\:\sep (x,v)\in \supp(f_0)\right\}\geq \delta_0.$$

 Then for all $T>0$, there exists a unique solution to Problem \eqref{assumption:space}-\eqref{eq:edo} on $[0,T]$ with these initial data.
\end{thm}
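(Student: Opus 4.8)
The plan is to reduce Theorem \ref{thm:main-m} to the single-charge analysis of Section \ref{sec:one} by exploiting a separation-of-scales argument. The crucial observation, already flagged in the introduction, is that the choice of $\Delta T$ and the other parameters guarantees that within any single time subinterval $(t_{i-1},t_i)$ a given plasma particle can interact closely with \emph{at most one} point charge. Concretely, I would first record the minimal pairwise separation between distinct point charges: using $|\eta_\alpha|\leq\sqrt{2H}$ (the analogue of \eqref{ineq:eta}, which survives because the total energy $H$ still bounds each $|\eta_\alpha|^2$ via positivity of the interaction terms $\sum_{\alpha\neq\beta}|\xi_\alpha-\xi_\beta|^{-1}$), together with the energy bound on the singular self-interaction, to show that the charges remain separated by a distance $d(T)>0$ on $[0,T]$. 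One then checks that the protection radius $\delta_i=Q_i^{-7/8}$ is, for $Q_i$ large, far smaller than $d(T)$, so that the balls $B(\xi_\alpha(t),2\delta_i)$ are pairwise disjoint throughout each subinterval.

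The second step is to define the pointwise energy adapted to the nearest charge. Following \eqref{def-m:en}, for each plasma characteristic I would set
\begin{equation*}
h(x,v,t)=\frac{|v-\eta_{\alpha}(t)|^2}{2}+\frac{1}{|x-\xi_{\alpha}(t)|}+K_1,
\end{equation*}
where $\alpha=\alpha(x,v,t)$ is the index of the \emph{closest} point charge at time $t$, and $K_1\geq\max(8H,1)$. Differentiating along trajectories, the key cancellation of Section \ref{sec:one} persists: because the particle is close to only one charge, the singular fields generated by the \emph{other} charges $\beta\neq\alpha$ are of order $d(T)^{-2}$, hence bounded by a constant depending only on conserved quantities, and they combine with the plasma field $E$ to produce a smooth, uniformly controlled contribution. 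Thus the analogue of \eqref{ineq:h1} reads
\begin{equation*}
\Big|\frac{d}{dt}\sqrt{h}\big(X(t),V(t),t\big)\Big|\leq |E(X(t),t)|+|E(\xi_\alpha(t),t)|+C(T),
\end{equation*}
and the singular term $\sum_{\beta\neq\alpha}(X-\xi_\beta)|X-\xi_\beta|^{-3}$ is absorbed into the constant since $X$ stays far from every $\xi_\beta$ with $\beta\neq\alpha$.

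With these two ingredients in place, the third step is essentially verbatim repetition of Section \ref{sec:one}. I would define $Q_T$, $Q_i$ and the splitting $\Delta T=1/(K_2Q_T)$ exactly as before, and re-run Lemmas \ref{lemma:fac}, \ref{lemma:stabh}, \ref{lemma:scat-plasma-plasma}, \ref{lemma:scatt-plasma-charge}, \ref{lemma:scatt-plasma-charge-bis}, \ref{lemma:est1} and \ref{lemma:est2} with the single charge $\xi$ replaced by whichever $\xi_\alpha$ is nearest to the trajectory under consideration. Each scattering estimate is local to one protection sphere, and the disjointness established in the first step means that no double-counting or interference between charges arises: a high-energy trajectory either avoids all spheres (governed by plasma-plasma scattering, Lemma \ref{lemma:scat-plasma-plasma}) or enters exactly one of them (governed by the charge-plasma scattering lemmas). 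This yields the analogue of Proposition \ref{prop:mainQi}, namely $Q_i\leq Q_{i-1}+C_2Q_T\Delta T$, and hence the exponential-in-time bound on $Q_T$ by the same iteration as in the proof of Proposition \ref{prop:mainQ}. The a priori bound \eqref{ineq:H1} follows, and existence and uniqueness are then obtained by the regularization and Gronwall arguments of Paragraph \ref{subsection:completed} without modification.

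The main obstacle is the first step: making rigorous the claim that distinct charges stay uniformly separated and that a plasma particle cannot be simultaneously near two charges. The separation $d(T)$ of the charges must be controlled by conserved quantities alone (not by the unknown $Q_T$), which is where the global energy $H$ and the positivity of all interactions are indispensable; one must verify that the charge trajectories $\xi_\alpha\in C^2$ cannot collide in finite time, using that each $|\eta_\alpha|$ is bounded and that the mutual repulsion $|\xi_\alpha-\xi_\beta|^{-1}$ is energy-controlled. Once $d(T)$ is secured independently of the plasma dynamics, the comparison $\delta_i\ll d(T)$ for $Q_i\geq K_3$ is immediate, and the reduction to the single-charge case proceeds with only the cosmetic substitutions indicated above.
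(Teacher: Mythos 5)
Your overall strategy coincides with the paper's: uniform separation of the charges from energy conservation, a time step $\Delta T$ so small that each plasma particle can approach at most one charge per subinterval $(t_{i-1},t_i)$, a rerun of the single-charge machinery of Section \ref{sec:one} near each charge, and the regularization/Gronwall endgame of Paragraph \ref{subsection:completed}. One calibration remark before the main point: the separation step that you call ``the main obstacle'' is in fact immediate and time-independent. Since every term in $H$ is nonnegative, conservation gives $\frac{1}{2}|\xi_\alpha(t)-\xi_\beta(t)|^{-1}\leq H$ for all $t$, i.e. $|\xi_\alpha(t)-\xi_\beta(t)|\geq \lambda:=1/(2H)$; no analysis of possible collisions in finite time is needed, and the constant depends only on $H$, not on $T$.

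The genuine gap is in your second step, the energy $h(x,v,t)$ defined with $\alpha=\alpha(x,v,t)$ the \emph{nearest} charge. This function is discontinuous along trajectories: a particle in the exterior region may cross the mediating surface between $\xi_\alpha$ and $\xi_\beta$ arbitrarily many times within one subinterval, and at each crossing the potential parts agree but the kinetic parts do not, so $h$ jumps by $v\cdot(\eta_\alpha-\eta_\beta)+\frac{1}{2}\left(|\eta_\beta|^2-|\eta_\alpha|^2\right)$, which for a high-energy trajectory is of order $Q\sqrt{H}$ in $h$, hence of order a constant in $\sqrt{h}$. With no a priori bound on the number of crossings, your claimed differential inequality for $\sqrt{h}$ does not hold as stated, and the jumps could accumulate uncontrollably. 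The paper circumvents this by keeping all $N$ energies $h_\alpha$ with \emph{fixed} reference charge (definition \eqref{def-m:en}), taking $Q_i$ as the maximum over $\alpha$, and proving the comparison estimate of Lemma \ref{lemma:comp-energ}, $\sqrt{h_\beta}\leq\sqrt{h_\alpha}+C_4$ on $B(\xi_\alpha(t),7\lambda/48)$ with $C_4$ depending only on $H$. That lemma is exactly what resolves the mismatch, in the proof of Proposition \ref{prop:mainQi-m}, between the index $\beta$ whose energy attains $Q_i$ at time $\overline{t}$ and the index $\alpha$ of the charge near which the trajectory actually starts: one controls $\sqrt{h_\alpha}$ dynamically via Lemma \ref{prop:alpha} and pays an additive $C_4$, absorbed as $C_4\leq C_4K_2Q\Delta T$. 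Your proposal needs either this lemma or a freezing of the reference charge on each subinterval together with endpoint comparisons; as written, that stitching is absent. A secondary omission: the paper decomposes the field by source location, $f=\sum_\beta f_\beta+\tilde f$ and $E=\sum_\beta E_\beta+\tilde E$, because bounding $\int_{t_{i-1}}^{t_i}|E(X(t),t)|\,dt$ for a particle near $\xi_\alpha$ requires treating separately sources near other charges (bounded, since $|X(t)-Y(t)|\geq 17\lambda/24$), sources near the same charge (single-charge analysis), and sources in $\Omega$, where in the annulus case $x\in B_\alpha\setminus B(\xi_\alpha(t_{i-1}),\lambda/16)$ one neglects the charges and uses pure plasma-plasma scattering. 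Your ``cosmetic substitutions'' claim elides this bookkeeping, although it does rest on the isolation properties \eqref{eq:isolated}--\eqref{eq:isolated2} that you correctly identified.
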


We proceed as in the proof of Theorem \ref{thm:main1}, considering a solution on $[0,T]$ obtained by regularizing
the singular interaction kernel in Problem \eqref{eq:system} and establishing estimates depending only on 
$\|f_0\|_{\infty}$ and $H$. Here again,  $C$, $C_i$ or $K_i,i=1,\ldots,$ 
denote positive constants depending only these quantities, $C$ changing possibly from a line to another. 

\medskip

First, we infer from the conservation of the total energy $H(f)$ that 
\begin{equation}
\label{est:many1}
 |\xi_\alpha(t)-\xi_\beta(t)|\geq  \lambda ,\quad \forall t\in [0,T],
\end{equation}
where $\lambda=1/(2H)$. Also,
\begin{equation}
 \label{est:many2}
 |\eta_\alpha(t)|\leq \sqrt{2H},\quad \forall t\in [0,T].
\end{equation}

For $\alpha=1,\ldots,N$ we introduce the energy related to the $\alpha$-th charge
\begin{equation}
\label{def-m:en}
 h_\alpha(x,v,t)=\frac{1}{2}|v-\eta_\alpha(t)|^2+\frac{1}{|x-\xi_\alpha(t)|}+K_1,
\end{equation}
where $K_1$ has already been defined in the previous section.

We set
\begin{equation*}
 Q=\max_{\alpha=1,\ldots,N}
\sup\left\{ \sqrt{h_\alpha}\left(X(x,v,0,t),V(x,v,0,t),t\right)\sep\quad t\in[0,T],(x,v)\in \supp(f_0)\right\}.
\end{equation*}

As before, we split the interval $[0,T]$ into small intervals $[t_{i-1},t_{i}]$ of length smaller than $\Delta T$, where
\begin{equation*}
 \Delta T=\frac{1}{K_2 Q}.
\end{equation*}
Here $K_2$, depending only on $H$ and $\|f_0\|_{\infty}$, is chosen in a similar way as in the previous section, and we assume moreover that 
$$K_2\geq \sqrt{2}\frac{48}{\lambda}.$$
Therefore, if $(X,V)(t)=(X,V)(x,v,t_{i-1},t)$ denotes a plasma particle leaving $(x,v)$ at time $t_{i-1}$ we have
thanks to \eqref{eq:l'}
\begin{equation}
\label{ineq-m:stab}
 \left|\frac{d}{dt}|X(t)-\xi_\alpha(t)|\right|\leq\sqrt{2}Q\leq \frac{\lambda}{48}\frac{1}{\Delta T},
\end{equation}
 so that for all $\alpha=1,\ldots,N$
\begin{equation}
\label{eq:isolated}
x\in B\left(\xi_\alpha(t_{i-1}),\frac{\lambda}{8}\right)\Rightarrow X(t)\in B\left(\xi_\alpha(t),\frac{7\lambda}{48}\right)
\quad\forall 
t\in [t_{i-1}, t_i]
\end{equation}
and
\begin{equation}
 \label{eq:isolated2}
x\in B\left(\xi_\alpha(t_{i-1}),\frac{\lambda}{8}\right)^c\Rightarrow X(t)\in 
B\left(\xi_\alpha(t),\frac{5\lambda}{48}\right)^c\quad\forall 
t\in [t_{i-1}, t_i].
\end{equation}
In view of \eqref{est:many1}, equation \eqref{eq:isolated} means 
that a plasma particle starting close to the 
$\alpha$-th charge at time $t_{i-1}$ cannot approach the other charges on $[t_{i-1},t_i]$. This property will enable us to isolate each charge
and to apply the previous analysis to the present case.

\medskip

For $i=1,\ldots,n$ we define
\begin{equation*}
 Q_i=\max_{\alpha=1,\ldots,N}
\sup\left\{\sqrt{h_\alpha}\left( X(t),V(t),t\right)\sep\: t\in (t_{i-1},t_i),(x,v)\in \supp(f(t_{i-1}))\right\}
\end{equation*}
where $(X,V)(t)=\left( X,V\right)(x,v,t_{i-1},t)$. Finally we set
\begin{equation*}
 Q_0=\max_{\alpha=1,\ldots,N}\sup\left\{\sqrt{h}(x,v,0)\sep \: (x,v)\in \supp(f_0)\right\}
\end{equation*}
so that
\begin{equation*}
 Q=\max_{i=0,\ldots,n}Q_i.
\end{equation*}

As explained in the previous section, Theorem \ref{thm:main-m} is a consequence of the following variant of 
Proposition \ref{prop:mainQi} for the present case
\begin{prop}
\label{prop:mainQi-m}
 Let $T>0$ such that $\Delta T<T$. For all $i=1,\ldots,n$ we have
\begin{equation*}
 Q_i\leq Q_{i-1}+C_1Q \Delta T.
\end{equation*}
\end{prop}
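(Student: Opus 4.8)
The plan is to reproduce, charge by charge, the machinery of Section \ref{sec:one}, the only new feature being that the singular fields created by the charges $\gamma$ other than the one we follow must be kept under control. The starting point is the analogue of \eqref{ineq:h1}: differentiating $\sqrt{h_\alpha}$ along a characteristic and using that the field $\frac{X-\xi_\alpha}{|X-\xi_\alpha|^3}$ cancels against the derivative of $\frac{1}{|X-\xi_\alpha|}$ (while $\frac{|V-\eta_\alpha|}{2\sqrt{h_\alpha}}\leq 1$), one obtains
\begin{equation*}
\begin{split}
\Big|\frac{d}{dt}\sqrt{h_\alpha}\left(X(t),V(t),t\right)\Big|\leq {}& |E(X(t),t)|+|E(\xi_\alpha(t),t)|\\
&+\sum_{\gamma\neq\alpha}\frac{1}{|X(t)-\xi_\gamma(t)|^2}+\sum_{\beta\neq\alpha}\frac{1}{|\xi_\alpha(t)-\xi_\beta(t)|^2}.
\end{split}
\end{equation*}
By \eqref{est:many1} the charge-charge sum is bounded by $(N-1)/\lambda^2$, hence contributes at most $C\Delta T\leq CQ\Delta T$ since $Q\geq 1$.

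I would then fix, as in Section \ref{sec:one}, a charge $\alpha$ and a trajectory $(X,V)$ with $\sqrt{h_\alpha}(X(\overline{t}),V(\overline{t}),\overline{t})=Q_i$ for some $\overline{t}$, and reduce as in \eqref{ineq:che} to the case $\sqrt{h_\alpha}(x,v,t_{i-1})>Q_i/2$. The decisive structural fact is the isolation property \eqref{eq:isolated}--\eqref{eq:isolated2}: on $[t_{i-1},t_i]$ the trajectory either stays inside $B(\xi_{\beta_0}(t),7\lambda/48)$ for a single $\beta_0$, or stays outside $B(\xi_\gamma(t),5\lambda/48)$ for every $\gamma$. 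In the latter case, and in the former case when $\beta_0=\alpha$, every charge $\gamma\neq\alpha$ remains at distance $\geq 41\lambda/48$ from the trajectory, so $\sum_{\gamma\neq\alpha}|X-\xi_\gamma|^{-2}\leq C$ and the extra fields are harmless; what survives is exactly the single-charge problem for $\alpha$. Lemmas \ref{lemma:est1} and \ref{lemma:est2} then apply essentially verbatim, the far-charge contributions to $\ddot\ell$, to $\ddot I$ in \eqref{ineq:I2}, and to the remainder \eqref{eq:reste} being absorbed as bounded perturbations (they are $\mathcal{O}(1)$ wherever the relevant denominator is small), yielding $\int_{t_{i-1}}^{t_i}\big(|E(X)|+|E(\xi_\alpha)|\big)\,dt\leq CQ\Delta T$.

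The one genuinely new situation, and the main obstacle, is $\beta_0\neq\alpha$: the maximizing trajectory attains energy $Q_i$ with respect to $\alpha$ while spending the whole interval near a \emph{different} charge $\beta_0$. My plan is to switch the reference charge to $\beta_0$. Since $|\eta_\alpha|,|\eta_{\beta_0}|\leq\sqrt{2H}$ by \eqref{est:many2} and the trajectory is far from $\alpha$ (so $|X-\xi_\alpha|^{-1}\leq 48/(41\lambda)$), expanding $|V-\eta_{\beta_0}|^2-|V-\eta_\alpha|^2$ and bounding $|V|\leq\sqrt{2}Q_i+\sqrt{2H}$ gives $h_{\beta_0}(\overline{t})\geq h_\alpha(\overline{t})-C\sqrt{H}\,Q_i=Q_i^2-CQ_i$, whence $\sqrt{h_{\beta_0}}(\overline{t})\geq Q_i-C$. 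Writing $Q_i^{\beta_0}$ for the supremum of $\sqrt{h_{\beta_0}}$ over the interval, this shows $Q_i\leq Q_i^{\beta_0}+C$; and since for $\beta_0$ the trajectory \emph{is} the near charge, the preceding single-charge estimate gives $Q_i^{\beta_0}\leq Q_{i-1}+CQ\Delta T$.

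Collecting the three cases yields $Q_i\leq Q_{i-1}+CQ\Delta T+C$, and the final observation closing the argument is that the stray bounded constant is itself of the required form: because $K_2Q\Delta T=1$ we have $C=CK_2\,Q\Delta T$, so it is absorbed into $C_1Q\Delta T$. The crux of the whole proof is thus the $\beta_0\neq\alpha$ bookkeeping — ensuring, through the isolation property together with the uniform bound \eqref{est:many2} on the charge velocities, that measuring the energy with respect to the "wrong" charge costs only an $\mathcal{O}(Q\Delta T)$ additive error rather than a multiplicative factor that would destroy the iteration.
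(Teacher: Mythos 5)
Your proposal is correct and follows essentially the same route as the paper: the same trichotomy based on the isolation property \eqref{eq:isolated}--\eqref{eq:isolated2} (trajectory near the maximizing charge, far from all charges, or near a different charge), with the single-charge machinery of Section \ref{sec:one} rerun for the nearby charge and the other singular fields treated as bounded perturbations --- exactly the content of Lemmas \ref{prop:alpha} and \ref{prop:away}. Your reference-charge switch with an additive $\mathcal{O}(1)$ cost, absorbed using $Q\Delta T=1/K_2$, is precisely the paper's Lemma \ref{lemma:comp-energ} and its use in the third case \eqref{est-m:10}.
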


We will only present the proof of Proposition \ref{prop:mainQi-m}. Again we may assume that
\begin{equation*}
 Q_i\geq K_3\geq1.
\end{equation*}

We introduce the security balls
\begin{equation}
 B_\beta=B\left(\xi_\beta(t_{i-1}), \frac{\lambda}{8}\right),\quad \beta=1,\ldots,N
\end{equation}
and the complementary set
\begin{equation*}
 \Omega=\R^3\setminus \bigcup_{\beta=1}^N B_\beta.
\end{equation*}

Next, we set for $t\in[t_{i-1},t_i]$
\begin{equation}
 \label{def:part}
\begin{split}
f_\beta(x,v,t)=\chi_{B_\beta}f(x,v,t)
\quad \text{and}\quad \tilde{f}(x,v,t)&=\chi_{\Omega}f(x,v,t),
\end{split}
\end{equation}
so that $$f=\sum_{\beta=1}^Nf_\beta + \tilde{f},$$ and we denote by $E_\beta$ and $\tilde{E}$ the 
corresponding electric fields, so that $$E=\sum_{\beta=1}^N E_\beta+\tilde{E}.$$

\begin{lemma}
 \label{prop:alpha}
Let $\alpha\in \{1,\ldots,N\}.$ Let $(x,v)\in \supp(f(t_{i-1}))$ such that
\begin{equation*}
x\in B_\alpha \quad \mathrm{and}\quad h_\alpha(x,v,t_{i-1})\geq \frac{1}{4}Q_i^2.
\end{equation*}
Then 
\begin{equation*}
 \sqrt{h_\alpha}\left(X(t),V(t),t\right)\leq \sqrt{h_\alpha}(x,v,t_{i-1})+C_2Q\Delta T,\quad t\in [t_{i-1},t_i].
\end{equation*}
\end{lemma}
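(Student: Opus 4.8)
The plan is to run the single-charge argument of Section~\ref{sec:one} with $\xi_\alpha$ playing the role of the unique charge, using the isolation property \eqref{eq:isolated}--\eqref{eq:isolated2} to treat the remaining charges as a bounded perturbation. Since $x\in B_\alpha$, estimate \eqref{eq:isolated} gives $X(t)\in B(\xi_\alpha(t),7\lambda/48)$ for $t\in[t_{i-1},t_i]$, so by \eqref{est:many1} the trajectory stays at distance $\geq \lambda-7\lambda/48\geq \lambda/2$ from every other charge $\xi_\gamma$, $\gamma\neq\alpha$. First I would differentiate $\sqrt{h_\alpha}$ along the characteristic using \eqref{eq:edo} and \eqref{eq:edo2}. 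The singular self-interaction term $(X-\xi_\alpha)/|X-\xi_\alpha|^3$ cancels exactly (this is the raison d'\^etre of $h_\alpha$), leaving
\[
\dot h_\alpha\bigl(X(t),V(t),t\bigr)=(V-\eta_\alpha)\cdot\bigl(E(X,t)-E(\xi_\alpha,t)\bigr)+(V-\eta_\alpha)\cdot G_\alpha,
\]
where $G_\alpha=\sum_{\gamma\neq\alpha}(X-\xi_\gamma)|X-\xi_\gamma|^{-3}-\sum_{\beta\neq\alpha}(\xi_\alpha-\xi_\beta)|\xi_\alpha-\xi_\beta|^{-3}$ collects the forces of the other charges. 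By the distance bounds just recalled and \eqref{est:many1}, $|G_\alpha|\leq C/\lambda^2$. Since $|V-\eta_\alpha|\leq\sqrt2\,\sqrt{h_\alpha}$, dividing by $2\sqrt{h_\alpha}$ yields the analogue of \eqref{ineq:h1},
\[
\Bigl|\frac{d}{dt}\sqrt{h_\alpha}\bigl(X(t),V(t),t\bigr)\Bigr|\leq \frac{1}{\sqrt2}\Bigl(|E(X(t),t)|+|E(\xi_\alpha(t),t)|+\frac{C}{\lambda^2}\Bigr).
\]

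Integrating on $[t_{i-1},t]$, the constant term contributes at most $C\lambda^{-2}\Delta T\leq CQ\Delta T$ (recall $Q\geq Q_i\geq K_3\geq1$), so it remains to bound $\int_{t_{i-1}}^{t_i}\bigl(|E(X)|+|E(\xi_\alpha)|\bigr)\,dt$ by $CQ\Delta T$. Here I would use the splitting $E=E_\alpha+\tilde E+\sum_{\beta\neq\alpha}E_\beta$. For $\beta\neq\alpha$, the density $\rho_\beta$ is supported, by \eqref{eq:isolated}, in $B(\xi_\beta(t),7\lambda/48)$, while $X(t)$ and $\xi_\alpha(t)$ lie in $B(\xi_\alpha(t),7\lambda/48)$; the separation is $\geq\lambda/2$, so a crude static estimate gives $|E_\beta(X,t)|+|E_\beta(\xi_\alpha,t)|\leq C\|\rho_\beta\|_{L^1}/\lambda^2\leq C/\lambda^2$, with time integral again $\leq CQ\Delta T$. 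The remaining field $E_\alpha+\tilde E$ is generated by $\rho_\alpha+\tilde\rho\leq\rho$, a plasma that stays away from every $\xi_\gamma$, $\gamma\neq\alpha$ (distance $\geq 5\lambda/48$ by \eqref{eq:isolated2}) and interacts with $\xi_\alpha$ exactly as in the one-charge model. I would therefore invoke Lemma~\ref{lemma:est1} for $\int|E(\xi_\alpha)|\,dt$ and Lemma~\ref{lemma:est2} (whose hypothesis $h(x,v,t_{i-1})>Q_i^2/4$ is precisely the assumption of the present lemma) for $\int|E(X)|\,dt$, each bounding its contribution by $CQ\Delta T$. Summing the three pieces and feeding the result into the integrated inequality gives the claim, with $C_2$ depending only on $H$ (through $\lambda=1/(2H)$), $N$ and $\|f_0\|_\infty$.

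The main obstacle is the last reduction: one must check that Lemmas~\ref{lemma:est1} and \ref{lemma:est2}, proved for a single charge, genuinely apply to the field of $\rho_\alpha+\tilde\rho$ relative to $\xi_\alpha$. The point is that each source characteristic $(Y,W)$ entering those estimates now obeys the one-charge dynamics plus the extra force $\sum_{\gamma\neq\alpha}(Y-\xi_\gamma)|Y-\xi_\gamma|^{-3}$, which is bounded by $C/\lambda^2$ because $\rho_\alpha+\tilde\rho$ keeps its support at distance $\geq 5\lambda/48$ from the other charges. This bounded perturbation merely shifts the acceleration bound $2Q_i^{7/4}+8KQ_i^{4/3}$ used throughout Lemmas~\ref{lemma:scat-plasma-plasma}--\ref{lemma:scatt-plasma-charge-bis} by a lower-order constant, so the stability windows and scattering estimates there persist up to harmless constants. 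Verifying that $\Delta T=1/(K_2Q)$, with $K_2\geq\sqrt2\,48/\lambda$, is still small enough to preserve those windows---this is exactly the extra requirement \eqref{ineq-m:stab} imposes on $K_2$---is the one genuinely new piece of bookkeeping compared with Section~\ref{sec:one}.
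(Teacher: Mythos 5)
Your proposal is correct and follows essentially the same route as the paper: the same cancellation of the $\alpha$-th singular force in $\dot h_\alpha$, the same decomposition $E=E_\alpha+\sum_{\beta\neq\alpha}E_\beta+\tilde E$, the same use of \eqref{est:many1}, \eqref{eq:isolated} and \eqref{eq:isolated2} to bound $F_\alpha$ and the far fields $E_\beta$, $\beta\neq\alpha$, pointwise by $C/\lambda^2$, and the same reduction of the remaining field to the adapted one-charge Lemmas \ref{lemma:est1} and \ref{lemma:est2}. The one organizational difference is your treatment of $\tilde E$: the paper applies the one-charge machinery only to $E_\alpha$ and disposes of $\tilde E(X(t),t)$ by a separate two-case geometric argument (if $x\in B(\xi_\alpha(t_{i-1}),\lambda/16)$ then $|X(t)-Y(t)|\geq\lambda/48$ for $\Omega$-sources, so $\tilde E(X)$ is pointwise bounded; otherwise $X$ and the $\Omega$-sources all stay at distance $\geq\lambda/24$ from every charge and the charge-free plasma-plasma analysis of \cite{Wo} applies), whereas you fold $\tilde\rho$ into the combined density $\rho_\alpha+\tilde\rho$ and run the full one-charge estimate, including the plasma-charge scattering sets, on it. Your variant works, but it needs one extra check that the paper's splitting avoids: for the $\Omega$-sources the protection-sphere sets ($S_3$ in Lemma \ref{lemma:est2} and $J$ in Lemma \ref{lemma:scatt-plasma-charge}) must be empty, i.e. $\delta_i=Q_i^{-7/8}<5\lambda/48$, which forces you to enlarge the threshold $K_3$ in \eqref{ineq:assumption} so that it depends also on $\lambda=1/(2H)$ (harmless, since $\lambda$ is determined by the conserved energy). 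With that point made explicit, together with the bounded-perturbation bookkeeping you already flag for Lemmas \ref{lemma:scat-plasma-plasma}--\ref{lemma:scatt-plasma-charge-bis}, your argument and the paper's coincide up to constants; what the paper's two-case split buys is that $\tilde E$ never has to interact with the $\delta_i$-scale analysis at all, while your packaging buys a single uniform invocation of the one-charge lemmas.
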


\begin{proof} Introducing the field
 \begin{equation*}
  F_\alpha(z,t)=\sum_{\beta\neq \alpha}\frac{z-\xi_\beta(t)}{|z-\xi_\beta(t)|^3},\quad z\neq \xi_\beta(t),
 \end{equation*}
we compute
\begin{equation}
\label{est-m:der}
\begin{split}
 \left|\frac{d}{dt} \sqrt{h_\alpha}\left(X(t),V(t),t\right)\right|&\leq |E\left(X(t),t\right)|+|E\left(\xi_\alpha(t),t\right)|
\\&+|F_\alpha\left(X(t),t\right)|+|F_\alpha\left(\xi_\alpha(t),t\right)|.
\end{split}
\end{equation}

Thanks to \eqref{eq:isolated} and \eqref{est:many1}, we observe that $X(t)$ remains
far from the charges $\xi_\beta(t)$ for $\beta\neq \alpha$. Hence the fields $F_\alpha(\xi_\alpha)$ and $F_\alpha(X)$ are 
bounded on $[t_{i-1},t_i]$ and we have
\begin{equation}
\label{est-m:0}
\int_{t_{i-1}}^{t_i}dt\,\left(|F_\alpha\left(\xi_\alpha(t),t\right)|+| F_\alpha\left(X(t),t\right)|\right)
\leq C\Delta T\leq CQ\Delta T.
\end{equation}

Moreover, since $\tilde{E}+\sum_{\beta\neq \alpha}E_\beta+F_\alpha$ is bounded by $\mathcal{O}(Q_i^{4/3})$ away from the 
$\alpha$-th charge, we may follow step by step the dynamical preparation performed in the previous section, 
simply replacing the quantities $\ell(t)$, $\sqrt{h}(t)$ and $I(t)$ by $\ell_\alpha(t)=|X(t)-\xi_\alpha(t)|$, 
$\sqrt{h_\alpha}(t)$ and $I_\alpha(t)=|X(t)-\xi_\alpha(t)|^2/2$.
In particular, adapting the proofs of Lemmas \ref{lemma:est1} and \ref{lemma:est2} we find 
\begin{equation}
\label{est-m:1}
 \int_{t_{i-1}}^{t_i} \,dt\left( |E_\alpha\left(\xi_\alpha(t),t\right)|+|E_\alpha\left(X(t),t\right)|\right)\leq C Q
 \Delta T.
\end{equation}

It remains to estimate the contributions of the other fields. Let $y\in B_\beta$. Thanks to \eqref{est:many1} and 
\eqref{eq:isolated} we have
\begin{equation*}
 |X(t)-Y(t)|\geq |\xi_\alpha(t)-\xi_\beta(t)|-|X(t)-\xi_\alpha(t)|-|Y(t)-\xi_\beta(t)|
\geq  \frac{17\lambda}{24},
\end{equation*}
hence $E_\beta(X)$ is bounded on $[t_{i-1},t_i]$. By \eqref{est:many1} and \eqref{eq:isolated2} $E_\beta(\xi_\alpha)$ is also bounded. Therefore
\begin{equation}
\label{est-m:3}
\sum_{\beta\neq \alpha} 
\int_{t_{i-1}}^{t_i}dt\, \left(|E_\beta\left(\xi_\alpha(t),t\right)|+|E_\beta\left(X(t),t\right)|\right)\leq CQ\Delta T. 
\end{equation}

We finally estimate the contribution of $\tilde{E}$. By \eqref{est:many1} and \eqref{eq:isolated2} $\tilde{E}(\xi_\beta)$ is bounded, thus
\begin{equation}
\label{est-m:5}
 \int_{t_{i-1}}^{t_i}dt\, |\tilde{E}\left(\xi_\alpha(t),t\right)|\leq C\Delta T\leq CQ \Delta T. 
\end{equation}

In order to estimate $\tilde{E}\left(X(t),t\right)$ we distinguish between two cases.

We assume first that $x\in B(\xi_\alpha(t_{i-1}),\lambda/16)$. For $y\in \Omega$ we then have $|X(t)-Y(t)|\geq \lambda/48$ by 
\eqref{est:many1} and \eqref{ineq-m:stab}. 
Hence $\tilde{E}(X)$ is bounded and we obtain
\begin{equation}
\label{est-m:6}
  \int_{t_{i-1}}^{t_i}dt\, |\tilde{E}\left(X(t),t\right)|\leq CQ \Delta T\quad \text{if}\quad 
x\in B\left(\xi_\alpha(t_{i-1}),\frac{\lambda}{16}\right). 
\end{equation}

Otherwise, we have $x\in B_\alpha\setminus B(\xi_\alpha(t_{i-1}),\lambda/16)$. Let $y\in \Omega$. 
In view of \eqref{est:many1} the particles $X(t)$ and $Y(t)$ both remain in 
$\R^3\setminus \cup_{\beta} B(\xi_\beta(t),\lambda/24)$ on $[t_{i-1},t_i]$. One may therefore neglect the plasma-charge interaction and only 
take into account the plasma-plasma interaction. Following the arguments in the case without charges (see \cite{Wo}), 
or adapting Lemmas \ref{lemma:scat-plasma-plasma} and \ref{lemma:est2} we obtain 
\begin{equation}
\label{est-m:7}
 \int_{t_{i-1}}^{t_i} dt\, |\tilde{E}\left(X(t),t\right)|\leq CQ\Delta T\quad \text{if}\quad 
 x\in B_\alpha\setminus B\left(\xi_\alpha(t_{i-1}),\frac{\lambda}{16}\right).
\end{equation}

Gathering estimates \eqref{est-m:0} to \eqref{est-m:7} and using \eqref{est-m:der} we are led to the conclusion of 
Proposition \ref{prop:alpha}.
\end{proof}
Our next result concerns the variation of the energies $h_\alpha$ away from the charges.
\begin{lemma}
 \label{prop:away} Let $(x,v)\in \supp(f(t_{i-1}))$ such that $x\in \Omega$.

For all $\alpha\in \{1,\ldots,N\}$ we have
\begin{equation*}
 \sqrt{h_\alpha}\left(X(t),V(t),t\right)\leq \sqrt{h_\alpha}(x,v,t_{i-1})+C_{3}Q\Delta T,\quad t\in (t_{i-1},t_i).
\end{equation*}
\end{lemma}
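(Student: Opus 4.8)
The plan is to mimic the proof of Lemma \ref{prop:alpha}, now exploiting the fact that the observed trajectory stays uniformly away from \emph{every} charge. Since $x\in\Omega$ we have $x\in B_\beta^c$ for all $\beta$, so \eqref{eq:isolated2} gives $|X(t)-\xi_\beta(t)|\geq 5\lambda/48$ for every $\beta$ and every $t\in[t_{i-1},t_i]$. In particular, for $Q_i\geq K_3$ with $K_3$ large enough (depending only on $H$), we have $5\lambda/48>2\delta_i=2Q_i^{-7/8}$, so the genuinely delicate scattering of the observed trajectory against a charge (Lemmas \ref{lemma:scatt-plasma-charge-bis} and \ref{lemma:est2}) simply does not occur here, and no lower bound on $h_\alpha(x,v,t_{i-1})$ is needed. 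I would start exactly as in \eqref{est-m:der}, differentiating $\sqrt{h_\alpha}$ along $(X,V)$ to get
\[
\Big|\frac{d}{dt}\sqrt{h_\alpha}(X(t),V(t),t)\Big|\leq |E(X(t),t)|+|E(\xi_\alpha(t),t)|+|F_\alpha(X(t),t)|+|F_\alpha(\xi_\alpha(t),t)|,
\]
so that it suffices to bound the time integral of each of the four terms by $CQ\Delta T$ and then integrate over $[t_{i-1},t]$.

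Three of the four terms are immediate. The charge fields $F_\alpha(\xi_\alpha(t),t)$ and $F_\alpha(X(t),t)$ are bounded on $[t_{i-1},t_i]$: the first by the charge separation \eqref{est:many1}, the second because $X(t)$ stays at distance $\geq 5\lambda/48$ from each $\xi_\beta$, $\beta\neq\alpha$; hence their integrals are $\leq C\Delta T\leq CQ\Delta T$ since $Q\geq 1$. The plasma field $E(\xi_\alpha(t),t)$ evaluated at the charge does not involve the trajectory $X$ at all, so its integral is controlled exactly as in Lemma \ref{prop:alpha}: splitting $E=\sum_\beta E_\beta+\tilde E$ and invoking \eqref{est-m:1}, \eqref{est-m:3} and \eqref{est-m:5} gives $\int_{t_{i-1}}^{t_i}|E(\xi_\alpha)|\,dt\leq CQ\Delta T$.

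The only real work is the plasma field along the trajectory, $\int_{t_{i-1}}^{t_i}|E(X(t),t)|\,dt$, and here I would again use $E=\sum_\beta E_\beta+\tilde E$. For the contribution $\tilde E(X)$ the source particles start in $\Omega$, so both $X(t)$ and $Y(t)$ remain away from all charges, and the estimate reduces to pure plasma-plasma scattering, handled by Lemma \ref{lemma:scat-plasma-plasma} together with the static estimates of Lemma \ref{lemma:est2}. For each fixed $\beta$, the contribution $E_\beta(X)$ has source particles in $B_\beta$ (which by \eqref{eq:isolated} may approach $\xi_\beta$), while $X(t)$ stays at distance $\geq 5\lambda/48>2\delta_i$ from $\xi_\beta$; this is precisely the configuration (observer far, source possibly near a single charge) treated in Lemma \ref{lemma:est2}. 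Applying that argument with $\xi$ replaced by $\xi_\beta$ and the source decomposition $S_1,\dots,S_4$ restricted to $y\in B_\beta$ — the near-charge correction interval $(t_-,t_+)$ of Lemma \ref{lemma:est2} being absent since $X$ never enters a protection sphere — yields $\int_{t_{i-1}}^{t_i}|E_\beta(X)|\,dt\leq CQ\Delta T$. Summing over the finitely many $\beta$ and adding $\tilde E$ gives $\int_{t_{i-1}}^{t_i}|E(X)|\,dt\leq CQ\Delta T$, and gathering the four bounds then integrating gives the claim. The main obstacle is bookkeeping: verifying that $X$ staying at distance $\geq 5\lambda/48$ from each charge indeed lets the single-charge estimates of Lemma \ref{lemma:est2} go through (in particular the lower bound $|X-Y|\geq\delta_i$ used in the $S_3$ estimate, which here is even more comfortable), but no new idea beyond the single-charge analysis is required.
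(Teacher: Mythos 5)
Your proposal is correct, and its skeleton coincides with the paper's: imitate Lemma \ref{prop:alpha}, note that \eqref{eq:isolated2} keeps $X(t)$ at distance at least $5\lambda/48$ from \emph{every} charge on $[t_{i-1},t_i]$ (so that, as you rightly observe, no lower bound on $h_\alpha(x,v,t_{i-1})$ is needed and the near-charge interval $(t_-,t_+)$ of Lemma \ref{lemma:est2} is empty), and bound $F_\alpha(\xi_\alpha)$, $F_\alpha(X)$ and $E(\xi_\alpha)$ exactly as in \eqref{est-m:0}, \eqref{est-m:1}, \eqref{est-m:3} and \eqref{est-m:5}, since none of these involve the observer's position in a new way. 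The one place where you diverge from the paper is the remaining term $\int_{t_{i-1}}^{t_i}|E_\beta(X(t),t)|\,dt$ for sources starting in $B_\beta$: you re-run the full $S_1,\ldots,S_4$ decomposition of Lemma \ref{lemma:est2} centred at $\xi_\beta$, in particular invoking the $S_3$ time-measure estimate \eqref{est:m1} (the adapted charge--plasma scattering Lemma \ref{lemma:scatt-plasma-charge}) together with the separation $|X(t)-Y(t)|\geq 5\lambda/48-\delta_i\geq\delta_i$; this is valid, and since the adaptation of Lemma \ref{lemma:scatt-plasma-charge} to $\xi_\beta$ is already required for \eqref{est-m:1}, you introduce no genuinely new ingredient. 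The paper instead splits the source ball geometrically, writing $B_\beta=B(\xi_\beta(t_{i-1}),\lambda/16)\cup\bigl(B_\beta\setminus B(\xi_\beta(t_{i-1}),\lambda/16)\bigr)$: for sources in the inner ball, \eqref{ineq-m:stab} gives $|X(t)-Y(t)|\geq 5\lambda/48-4\lambda/48=\lambda/48$, so this piece of the field is bounded by a fixed constant and no scattering estimate whatsoever is needed; for sources in the annulus, $|Y(t)-\xi_\beta(t)|\geq\lambda/24$ for all $t\in[t_{i-1},t_i]$, so the charge can be ignored there and only the plasma--plasma machinery (Lemma \ref{lemma:scat-plasma-plasma} plus the static estimates) enters. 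The paper's split buys economy, exploiting that a source close to $\xi_\beta$ is automatically at bounded distance from the far observer, which makes the delicate measure estimate superfluous for this term; your version buys uniformity, being a verbatim transcription of the single-charge Lemma \ref{lemma:est2} with even more comfortable separations (and your remaining bookkeeping --- stability of relative velocities for the $S_4^{(1)}$ part and hypothesis \eqref{ineq:scat1} for Lemma \ref{lemma:scat-plasma-plasma}, both trajectories staying at distance greater than $\delta_i$ from all charges --- checks out). Both routes yield the bound $CQ\Delta T$ per interval.
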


\begin{proof} We have $X(t)\in\R^3\setminus \cup_{\alpha} B(\xi_\alpha(t), 5\lambda/48)$.
Imitating the proof of Lemma \ref{prop:alpha}, it only remains to estimate 
\begin{equation*}
\int_{t_{i-1}}^{t_i}dt\, |E_\alpha\left(X(t),t\right)|
\end{equation*}
for all $\alpha$.
We proceed as before, writing 
$$B_\alpha=B\left(\xi_\alpha(t_{i-1}),\frac{\lambda}{16}\right)
\cup \left( B_\alpha\setminus B\left(\xi_\alpha(t_{i-1}),\frac{\lambda}{16}\right)\right),$$
noticing that $|X(t)-Y(t)|\geq \lambda/48$ when $y$ belongs to the first set and ignoring the effect of the charges 
in the second set.
The conclusion follows.
\end{proof}

Finally, we can compare the energies as follows
\begin{lemma}
 \label{lemma:comp-energ}
There exists a constant $C_4$ such that for $t\in [0,T]$ we have for all $\alpha,\beta$
\begin{equation*}
\sqrt{h_\beta}(X,V,t)\leq \sqrt{h_\alpha}(X,V,t)+C_4,\quad \forall X\in B\left(\xi_\alpha(t),\frac{7\lambda}{48}\right),\quad\forall V\in\R^3.
\end{equation*}
 \end{lemma}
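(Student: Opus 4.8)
The plan is to treat this as a purely static (algebraic and geometric) comparison of the two energies at a fixed phase point $(X,V)$ and a fixed time $t$; no dynamical information is needed. Recalling the definition \eqref{def-m:en}, the energies $h_\alpha$ and $h_\beta$ differ only through the centering velocity $\eta_\alpha$ versus $\eta_\beta$ in the kinetic term and through the Coulomb singularity at $\xi_\alpha$ versus $\xi_\beta$ in the potential term. I would therefore split the comparison into a kinetic part and a potential part, and the crucial point is to produce an \emph{additive} constant $C_4$ rather than a multiplicative one; this dictates using the subadditivity $\sqrt{a+b}\le \sqrt a+\sqrt b$ to peel off the kinetic contribution before estimating.

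First I would exploit the hypothesis $X\in B(\xi_\alpha(t),7\lambda/48)$ together with the separation estimate \eqref{est:many1}. By the triangle inequality,
\[
|X-\xi_\beta(t)|\ge |\xi_\alpha(t)-\xi_\beta(t)|-|X-\xi_\alpha(t)|\ge \lambda-\frac{7\lambda}{48}=\frac{41\lambda}{48},
\]
so the singular term of $h_\beta$ satisfies $1/|X-\xi_\beta(t)|\le 48/(41\lambda)$, a constant depending only on $\lambda=1/(2H)$, hence only on $H$. This is the one place where the closeness of $X$ to $\xi_\alpha$ is genuinely used: it guarantees that $X$ stays away from every \emph{other} charge, rendering the otherwise singular potential term of $h_\beta$ harmless.

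For the kinetic part I would use \eqref{est:many2}, which gives $|\eta_\alpha(t)-\eta_\beta(t)|\le 2\sqrt{2H}$, together with $|V-\eta_\beta(t)|\le |V-\eta_\alpha(t)|+|\eta_\alpha(t)-\eta_\beta(t)|$. Applying subadditivity of the square root to the summands of $h_\beta$ in \eqref{def-m:en},
\[
\sqrt{h_\beta}(X,V,t)\le \frac{1}{\sqrt 2}|V-\eta_\beta(t)|+\sqrt{\frac{1}{|X-\xi_\beta(t)|}+K_1},
\]
and bounding the first term by $\frac{1}{\sqrt 2}|V-\eta_\alpha(t)|+2\sqrt{H}$ and the second by the constant from the previous step, I would conclude with $\sqrt{h_\alpha}(X,V,t)\ge \frac{1}{\sqrt 2}|V-\eta_\alpha(t)|$ that
\[
\sqrt{h_\beta}(X,V,t)\le \sqrt{h_\alpha}(X,V,t)+2\sqrt{H}+\sqrt{\frac{48}{41\lambda}+K_1},
\]
which is the claim with $C_4=2\sqrt{H}+\sqrt{48/(41\lambda)+K_1}$. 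There is no serious obstacle here; the only thing to watch is not to estimate $h_\beta-h_\alpha$ or their ratio directly (which would mix in the possibly large kinetic energy and spoil the constant), but to separate the kinetic contribution additively via the triangle inequality and subadditivity as above.
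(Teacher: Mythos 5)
Your proof is correct, and it takes a genuinely different route from the paper's. Both arguments rest on the same two conserved-energy inputs --- the charge separation \eqref{est:many1}, which for $\beta\neq\alpha$ gives $|X-\xi_\beta(t)|\geq\lambda-\frac{7\lambda}{48}=\frac{41\lambda}{48}$ and so tames the $\beta$-singularity, and the velocity bound \eqref{est:many2} --- but you work at the level of $\sqrt{h}$, peeling off the kinetic term by subadditivity of the square root and the triangle inequality in $v$, and you end with the fully explicit constant $C_4=2\sqrt{H}+\sqrt{\frac{48}{41\lambda}+K_1}$. The paper instead compares the \emph{squared} energies: it seeks $2h_\beta\leq 2h_\alpha+2C_4^2+\sqrt{2h_\alpha}\,C_4$, in which the $|V|^2$ terms cancel exactly, the potential difference is simply dropped using only the sign information $|X-\xi_\beta|>|X-\xi_\alpha|$ (no quantitative lower bound on $|X-\xi_\beta|$ is needed there), and the remaining cross term $2V\cdot(\eta_\alpha-\eta_\beta)$, linear in $|V|$, is absorbed into $\sqrt{2h_\alpha}\,C_4$ via \eqref{ineq:H2}, i.e. $|V|\leq 2\sqrt{h_\alpha}$, which forces the choice $C_4\geq 2\sqrt{2H}$. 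Amusingly, that is precisely the ``estimate $h_\beta-h_\alpha$ directly'' strategy you warned against: it does work in the paper, but only because the large choice $K_1\geq 8H$ underlying \eqref{ineq:H2} lets $\sqrt{h_\alpha}$ dominate $|V|$. Your version is shorter and more self-contained: it needs neither \eqref{ineq:H2} nor any largeness of $K_1$ (any $K_1\geq 0$ would do), it is manifestly uniform in $V\in\R^3$, and the constant depends only on $H$, as required (recall $\lambda=1/(2H)$ and $K_1$ depends only on $H$). The paper's route buys essentially nothing extra here beyond using the monotonicity $|X-\xi_\beta|>|X-\xi_\alpha|$ in place of your quantitative bound $1/|X-\xi_\beta|\leq 48/(41\lambda)$. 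One cosmetic caveat: for $\alpha=\beta$ your separation step is vacuous, but the claimed inequality is then trivial, so nothing is lost.
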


\begin{proof}  
Our aim is to show that
\begin{equation*}
 \frac{2}{|X-\xi_\beta|}+|\eta_\beta|^2-2V\cdot \eta_\beta\leq 
\frac{2}{|X-\xi_\alpha|}+|\eta_\alpha|^2-2V\cdot \eta_\alpha +2C_4^2+\sqrt{2h_\alpha}C_4,
\end{equation*}
which holds if
\begin{equation*}
  |V|\leq \frac{1}{2|\eta_\alpha-\eta_\beta|}\left(|\eta_\alpha|^2-|\eta_\beta|^2+\frac{2}{|X-\xi_\alpha|}-\frac{2}{|X-\xi_\beta|}
 +2C_4^2+\sqrt{2h_\alpha}C_4\right).
\end{equation*}
We have $|X-\xi_\beta|>|X-\xi_\alpha|$ by virtue of \eqref{est:many1}. 
Using \eqref{ineq:H2} and \eqref{est:many2} we find that the right-hand side in the above inequality is larger than
\begin{equation*}
 \frac{1}{4\sqrt{2H}}\left( -2 H+2C_4^2+2|V|C_4\right),
\end{equation*}
which is larger than $|V|$ provided that $C_4\geq 2\sqrt{2H}$.
\end{proof}

\medskip

We may finally turn to the

\textbf{Proof of Proposition \ref{prop:mainQi-m}.}

\medskip

Let $\overline{t}\in [t_{i-1},t_i]$ and $\beta\in \{1,\ldots,N\}$ such that
\begin{equation*}
 Q_i^2=h_\beta\left(X(\overline{t}),V(\overline{t}),\overline{t}\right).
\end{equation*}
There are three possibilities.

\medskip

If $x\in B_\beta$ then $X(t)\in B(\xi_\beta(t),7\lambda/48)$ on $[t_{i-1},t_i]$. In view of \eqref{est-m:der}, stability
for $\sqrt{h_\beta}$ holds on $[t_{i-1},t_i]$. In particular, we may choose $K_2$ sufficiently large so that $\sqrt{h_\beta}(x,v,t_{i-1})
\geq Q_i/2$. Hence Lemma \ref{prop:alpha} yields 
\begin{equation}
\label{est-m:8}
 Q_i\leq Q_{i-1}+C_2Q\Delta T.
\end{equation}

If $x\in \Omega$ we use Lemma \ref{prop:away} for $\sqrt{h}_\beta$ to obtain
\begin{equation}
\label{est-m:9}
 Q_i\leq Q_{i-1}+C_3Q\Delta T.
\end{equation}

Finally, if $x\in B_\alpha$ for some $\alpha\neq\beta$ then stability
for $\sqrt{h_\alpha}$ holds on $[t_{i-1},t_i]$ by \eqref{est-m:der}. Since by Lemma \ref{lemma:comp-energ} at time $\overline{t}$ we have
$$\sqrt{h_\alpha}(X(\overline{t}),V(\overline{t}),\overline{t})\geq \sqrt{h_\beta}(X(\overline{t}),V(\overline{t}),\overline{t})-C_4\geq \frac{3Q_i}{4}$$
 provided that $Q_i\geq K_3\geq 4C_4$, we may choose $K_2$ such that at time $t_{i-1}$ it holds $\sqrt{h_\alpha}(x,v,t_{i-1})\geq Q_i/2$. Hence we may apply Lemma \ref{prop:alpha} to 
$\sqrt{h_\alpha}$. Relying then once more on Lemma \ref{lemma:comp-energ} at time $\overline{t}$ we get
\begin{equation}
\label{est-m:10}
Q_i\leq  Q_{i-1}+C_4+C_2 Q \Delta T= Q_{i-1}+(C_4K_2+C_2) Q \Delta T.
\end{equation}

Setting $C_1=\max(C_2,C_3,C_4K_2+C_2)$ the conclusion follows from \eqref{est-m:8}, \eqref{est-m:9} and \eqref{est-m:10}.

\section{Final remarks and comments}

The first comment concerns the regularity of the solution we have
constructed in Theorems \ref{thm:main1} and \ref{thm:main-m}. The fact that $t \mapsto f(t)$
propagates the $C^k$ regularity of the initial condition is standard; this is a consequence of the almost-Lipschitz
regularity of the electric field $E$ (see \eqref{eq:almost-lipschitz}). In particular if
$f_0 \in C^1$ the solution we have constructed is a classical solution to Problem \eqref{eq:system}.

\medskip

The second comment regards our hypotheses. We assumed the charges with an
island of vacuum and $f_0$ compactly supported in $v$. Of course the total
energy can be finite also in absence of these hypotheses. However when
the trajectories can be arbitrarily close to the charges  or when they have
arbitrarily large velocity, there is not any uniform upper bound for the initial
energy of all the trajectories $Q_0$. Therefore our method requires new technical ideas to
take into account these large tails. 

\medskip

Finally we mention that the case of an interaction plasma-charge of
different signs eludes our techniques: new ideas and estimates are needed.

\label{sec:conclusion}

\bigskip

\noindent \textbf{Acknowlegments.} Work performed under the auspices of GNFM-INDAM and the Italian Ministry of the University
(MIUR).
The second author has been partially supported by the Fondation des Sciences Math\'ematiques 
de Paris.

\medskip

\bigskip

{(C. Marchioro)}
{Dipartimento di Matematica G. Castelnuovo, Universit\`a di Roma La Sapienza, Italy}.
\emph{E-mail address}: marchior@mat.uniroma1.it.\\

(E. Miot) 
{Dipartimento di Matematica G. Castelnuovo, Universit\`a di Roma La Sapienza, Italy}.
\emph{E-mail address}: miot@ann.jussieu.fr.\\

(M. Pulvirenti) 
{Dipartimento di Matematica G. Castelnuovo, Universit\`a di Roma La Sapienza, Italy}.
\emph{E-mail address}: pulviren@mat.uniroma1.it.\\

\end{document}